\pdfoutput=1
\documentclass[]{article}
\usepackage{amssymb,hyperref,verbatim,cite,graphicx}
\usepackage{amsmath}
\usepackage{amsthm}
\newtheorem{proposition}{Proposition}
\newtheorem{theorem}{Theorem}
\newtheorem{corollary}{Corollary}
\usepackage{authblk}
\usepackage{float}
\newfloat{Chart}{htbp}{grf}
\makeatletter \let\cl@chapter\relax \makeatother
\usepackage{multirow, longtable}
\usepackage{booktabs}
\usepackage{multirow}
\usepackage{diagbox}
\usepackage{comment}
\usepackage{pdflscape}
\usepackage{mathtools}

\usepackage{longtable}
\usepackage{threeparttable}
\usepackage{enumerate}
\usepackage[dvipsnames]{xcolor}
\usepackage{tikz}
\tikzstyle{startstop} = [rectangle, rounded corners, minimum width=3cm, minimum height=1cm,text centered, draw=black]
\tikzstyle{io} = [rectangle, rounded corners, minimum width=3cm, minimum height=1cm,text centered, draw=black]
\tikzstyle{process} = [rectangle, minimum width=2cm, minimum height=1cm, text centered, draw=black]
\tikzstyle{thread} = [rectangle, minimum width=2cm, minimum height=1cm, text centered, draw=black]
\tikzstyle{parallel} = [rectangle, minimum width=3cm, minimum height=1cm, text centered, draw=black]
\tikzstyle{arrow} = [thick,->,>=stealth]
\usepackage{pgfplots}
\pgfplotsset{width=10cm,compat=1.9}
\usepackage[ruled,vlined,linesnumbered]{algorithm2e}
\usepackage{hyperref}
\usepackage{subcaption}
\hypersetup{
    colorlinks=true,
    linkcolor=blue,
    filecolor=magenta,      
    urlcolor=cyan,
}
\usepackage{cleveref, xurl}

\begin{document}

\newcommand{\red}{\color{red}}
\newcommand{\blue}{\color{blue}}
\newcommand{\green}{\color{green}}
\allowdisplaybreaks

\title{An Alternating Primal Heuristic for Nonconvex MIQCQP with Dynamic Convexification and Parallel Local Branching}
\author[1]{Yongzheng Dai}
\author[2]{Chen Chen}
\affil[1]{ISyE, Georgia Institute of Technology, Atlanta, GA, USA}
\affil[2]{ISE, The Ohio State University, Columbus, OH, USA}
\date{}
\maketitle

\begin{abstract}
We develop a novel primal heuristic for nonconvex Mixed-Integer Quadratically Constrained Quadratic Programs (MIQCQPs). The method is built around a convex approximation that is dynamically adjusted within a feasibility-pump-style alternating heuristic. Approximations are adjusted based on the structure of the MIQCQP instance.  Additionally, parallelized local branching is incorporated to further refine detected solutions. This paper builds upon the second-place finalist submission in the 2025 Land-Doig MIP Computational Competition. Our results are validated with computational experiments on instances from QPLIB, finding feasible solutions for three previously unsolved cases and improving the best-known solutions for fifteen instances within five minutes of runtime.
\end{abstract}

\section{Introduction}\label{sec:intro}

Consider a bounded Mixed-Integer Quadratically-Constrained Quadratic Programming (MIQCQP) problem,

\begin{subequations}\label{eq:miqcqp}
    \begin{align}
        \min\ &x^TQ^0x + a_0^Tx + b_0\label{eq:obj} \\
        \mbox{s.t.}\quad &x^TQ^kx + a_k^Tx \leq b_k,\ \forall k\in\{1,...,m_1\}\label{eq:quadcon}\\
        &Ax \leq b^{A},\label{eq:linearcon}\\
        &\ell \leq x \leq u,\label{eq:boxcon}\\
        &x_j\in \mathbb{Z},\ \forall j\in\mathcal{I}, \label{eq:intcon}
    \end{align}
\end{subequations}
The parameters are $Q^k \in \mathbb{R}^{n\times n}$, $a_k\in \mathbb{R}^n$ for all $k\in\{0,...,m_1\}$, $b_0\in \mathbb{R}$, $b\in \mathbb{R}^{m_1}$, $A\in \mathbb{R}^{m_2\times n}$, $b^A\in \mathbb{R}^{m_2}$, $\ell \in \mathbb{R}^n$, and $u \in \mathbb{R}^n$. Variables are given by $x\in \mathbb{R}^n$ with  integer entries $x_j\in \mathbb{Z}$ for $j\in \mathcal{I} \subseteq \mathcal{N} = \{1,...,n\}$. Furthermore, w.l.o.g., we assume $Q^k\in \mathbb{S}^n$ is a symmetric matrix, for all $k\in\{1,...,m_1\}$. Let $\mathcal{B} := \{j\in \mathcal{I}\ |\ \ell_j = 0 \wedge u_j= 1\}$ be the set of indices for all binary variables (integer variables with $l_j=0, u_j=1$). If $Q^k\succeq 0$ for all $k\in\{0,...,m_1\}$, Problem~(\ref{eq:miqcqp}) is a convex MIQCQP, whose continuous relaxation is convex; otherwise, we refer to it as a nonconvex MIQCQP.

We categorize MIQCQPs into three classes:

\begin{enumerate}
    \item \textbf{MIBQP}: mixed-integer box-constrained quadratic programs. MIQCQP with a quadratic objective function (\ref{eq:obj}), box constraints (\ref{eq:boxcon}), and integer constraints (\ref{eq:intcon}), and without any other linear or quadratic constraints.
    \item \textbf{MIQP}: mixed-integer quadratic programming. MIQCQP with quadratic objective function (\ref{eq:obj}) and at least one general linear constraint (\ref{eq:linearcon}), and without quadratic constraints. 
    \item \textbf{MIQCP}: mixed-integer quadratically constrained programming. MIQCQP with at least one quadratic constraint ($m_1 \geq 1$).
\end{enumerate}

These classes are substantially different from the perspective of a feasibility heuristic. Generating feasible solutions for MIBQPs is trivial (though NP-hard in general to approach optimality), as any point satisfying the variable bounds and integrality conditions is feasible. For MIQPs, a feasible solution can be detected using existing primal heuristics developed for Mixed-Integer Linear Programs (MILPs), such as those in \cite{fischetti2005feasibility,fischetti2003local,pisinger2018large}. However, for MIQCPs, the presence of nonconvex quadratic constraints makes feasible solution detection significantly more challenging and thus computationally expensive (see, e.g. \cite{wiese2021computational}). 

In this work, we develop a primal heuristic to find feasible solutions for nonconvex MIQCQPs, which consists of two stages: (i) detecting a feasible solution with an alternating method and (ii) refining the feasible solution with local branching. The alternating method involves a dynamic, eigenvalue-based convex approximation scheme that iteratively refines approximations of nonconvex quadratic terms. This scheme is integrated with integer rounding in a feasibility-pump-like alternating heuristic approach; subsequently, the feasible solution is refined with parallel local branching.

\subsection{Literature Review}
MIQCQP generalizes both Mixed-Integer Linear Programming (MILP) and (continuous) Quadratically Constrained Quadratic Programming (QCQP), enabling the representation of problems that combine discrete structures and nonlinear relationships. Such problems arise in a wide range of real-world applications, including power systems \cite{bai2015decomposition,liu2018global}, water systems \cite{faria2011novel}, pooling and mixing processes \cite{castro2016source,beach2020approximate}, signal processing \cite{pan2021joint}, finance \cite{cui2013convex}, and machine learning \cite{bertsimas2009algorithm}. 

There is a wide range of solution techniques for MIQCQPs, but this paper focuses primarily on primal heuristics (see \cite{berthold2018computational} for a recent survey), i.e. algorithms that aim (but are not guaranteed) to find high-quality feasible solutions quickly, e.g. \cite{chicco2020metaheuristic,lopez2013heuristic,cattaruzza2024exact}. The work in this paper was initiated by Yongzheng Dai as a submission for the \href{https://github.com/foreverdyz/primalheuristic\_miqcqp}{Land-Doig MIP Computational Competition 2025}\footnote{https://www.mixedinteger.org/2025/competition} on primal heuristics for MIQCQP. This competition included submissions such as a Frank-Wolfe-based primal heuristic \cite{mexi2025frank} and a classification-and-relaxation method. This paper was inspired especially by primal heuristics that have been successfully incorporated into MILP solvers: the feasibility pump \cite{fischetti2005feasibility}, local branching \cite{fischetti2003local,nannicini2008local}, large neighborhood search \cite{pisinger2018large}, and rounding heuristics \cite{nannicini2012rounding}. Many attempts have been made to extend these techniques nonlinear settings, and this paper is an additional contribution to this literature. Specifically, our approach involves a unique combination of dynamic convex approximation (e.g. \cite{gomez2025solving,castro2015tightening,beach2022compact,wiese2021computational,bienstock2025accurate}) , alternating heuristic (e.g.  \cite{d2012storm, belotti2017three}), and local branching (e.g. \cite{liberti2011recipe}) for nonconvex MIQCQP. 

We also leverage parallelization in local branching, and so contribute to the growing literature on incorporating parallelization in Mixed-Integer Programming (MIP) solvers (see, e.g., \cite{bixby2007progress,berthold2012solving,achterberg2013mixed,koch2022progress}), and parallel primal MIP heuristics in particular \cite{powley1990parallel, linderoth2001parallel, salvagnin2025fix, mexi2025frank}. We note also substantial efforts in massively parallel computing for MIP, e.g. \cite{phillips2006massively,eckstein2015pebbl,shinano2016solving,koch2012could,shinano2018fiberscip,perumalla2021design}, as well as parallel presolving \cite{gleixner2023papilo,dai2023parallelized,dai2024serial}.

\subsection{Organization}

The remainder of the paper is organized as follows. Section~\ref{sec:convex_approx} discusses existing convex reformulations or relaxations to nonconvex MIQCQPs and our proposed convex approximation. Section~\ref{sec:primal_heuristics} presents the novel primal heuristics based on the proposed convex approximation, and Section~\ref{sec:PLB} develops the parallel local branching. Finally, numerical experiments is shown in Section~\ref{sec:experiment}.

\section{Convex Approximations}\label{sec:convex_approx}

This section begins with established convex reformulations for nonconvex MIQCQPs and the sufficient conditions under which these reformulations are relaxations. Then we propose a convex approximation and a novel fixed-point approach to iteratively refine the approximation.

\subsection{Established Convex Reformulations}

Consider following reformulation of Problem~(\ref{eq:miqcqp}) from \cite{billionnet2016exact}:

\begin{subequations}\label{eq:convex_miqcqp}
    \begin{align}
        \min\ &x^T(Q^0 + P^0)x + a_0^Tx + b_0 - \langle X, P^0\rangle\label{eq:cMIQCP_obj}\\
        \mbox{s.t.}\quad &x^T(Q^k + P^k)x + a_k^Tx - \langle X, P^k\rangle \leq b_k,\ \forall k\in\{1,...,m_1\} \label{eq:cMIQCP_qcon}\\
        &Ax \leq b^A\\
        & X = x\circ x \label{eq:cMIQCP_bilinear}\\
        &\ell \leq x \leq u,\\
        &x_j\in \mathbb{Z},\ \forall j\in\mathcal{I},
    \end{align}
\end{subequations}
where $X = x\circ x$ is equivalent to $X_{ij} = x_ix_j$ for $\forall i,j\in\{1,...,n\}$. The idea is to perturb each indefinite quadratic coefficient matrix $Q^k\not\succeq 0$ with $P^k$ such that $Q^k+P^k \succeq 0$. This shifts the non-convexity in (\ref{eq:convex_miqcqp}) to the integrality of variables and bilinear constraints $X=x\circ x$. If either $x_i$ or $x_j$ is binary, $X_{ij} =x_ix_j$ can also be linearized by RLT \cite{sherali2002tight,bestuzheva2024efficient} inequalities  (see also McCormick \cite{mccormick1976computability}). For example, supposing $x_j$ is binary, we have
\begin{equation}\label{eq:brlt}
    \begin{aligned}
        \ell_ix_j\leq &X_{ij} \leq u_ix_j,\\
        x_i - u_i(1-x_j)\leq &X_{ij} \leq x_i - \ell_i(1-x_j).
    \end{aligned}
\end{equation}
This notion can be extended to bounded integer variables via binary (or unary) transformation.  For example,
\begin{equation}\label{eq:bexp}
    x_j = \sum_{h=0}^{\lfloor \log_2(u_j - \ell_j)\rfloor} 2^ht_{jh} + \ell_j,
\end{equation}
where $t$ are auxiliary binary variables. Thus $X_{ij} = x_ix_j$ can be rewritten in terms of more auxiliary variables of the form $z_{ijh} = t_{jh}x_i$, which can be in turn be linearized by Equation~(\ref{eq:brlt}).

Let $\mathcal{L}:=\{(i,j)\ |\ \exists k: P_{ij}^k\neq 0\}$, which denotes the set of all variable pairs that are perturbed with a non-zero coefficient in Problem~(\ref{eq:convex_miqcqp}); moreover, let $\mathcal{L}_c:=\{(i,j)\ |\ (\exists k: P_{ij}^k\neq 0) \wedge (i\not\in\mathcal{I}) \wedge (j\not\in\mathcal{I})\}$, which denotes the set of variable pairs that are perturbed in Problem~(\ref{eq:convex_miqcqp}) but cannot be linearized\footnote{Note that bilinear terms with unbounded integers also cannot be linearized, but we assume variable bounds throughout the paper. Hence $\mathcal{L}_c$ is only defined over continuous variables.} by Equation~(\ref{eq:brlt}).  If $\mathcal{L}_c = \emptyset$, Problem~(\ref{eq:convex_miqcqp}) can be linearized exactly by Equation~(\ref{eq:brlt}), which is a convex reformulation to the original MIQCQP~(\ref{eq:miqcqp}); this setting has been discussed in \cite{adams1986tight,billionnet2012extending,billionnet2016exact,wiese2021computational}. However, if $\mathcal{L}_c \neq \emptyset$, Equation~(\ref{eq:brlt}) is only a linear relaxation to $X_{ij} = x_ix_j$; thus, Problem~(\ref{eq:convex_miqcqp}) is a convex relaxation instead of reformulation to the original MIQCQP~(\ref{eq:miqcqp}). We will now consider how to construct this relaxation, and then adapt it to derive convex approximations.

\subsection{Eigenvalue-based Relaxation} \label{sec:org_eig_method}

Wiese \cite{wiese2021computational} summarized several approaches to select $P^k$ for reformulation~(\ref{eq:convex_miqcqp}) (when $\mathcal{L}_c = \emptyset$), including complete linearization, eigenvalue method, diagonal method, and maximizing the dual bound. For our primal heuristic (for the setting $\mathcal{L}_c \neq \emptyset$), we choose to focus on the eigenvalue method as it runs relatively fast in the reported results while providing effective relaxation bounds over a broad range of MIQCQP instances. 


The eigenvalue method \cite{hammer1970some} uses perturbing matrices $P^k := -\lambda^k I$ in the reformulation~(\ref{eq:convex_miqcqp}), where $I$ is the identity matrix, $\lambda^k := \min\{0, \lambda_{\min}^k\}$, and $\lambda_{\min}^k$ is the minimum eigenvalue of $Q^k$ for $k = 0,1,...,m_1$.

Now let us consider linearization of (\ref{eq:cMIQCP_bilinear}) in the eigenvalue method, extended to our setting. Since $P^k = -\lambda^k I$ only contains nonzero elements in the diagonal, all non-diagonal terms in $X$ can be ignored in the objective function (\ref{eq:cMIQCP_obj}) and quadratic constraints (\ref{eq:cMIQCP_qcon}). Therefore, we focus on the linearization for $X_{ii} = x_i^2$.

In our MIQCQP setting (\ref{eq:miqcqp}), all variables are bounded, so without loss of generality, we assume that all variables are nonnegative, i.e., $0\leq x\leq u$.

For all binary variables, i.e., $i\in\mathcal{B}$, we have $X_{ii} = x_i^2 = x_i$. Moreover, we can replace non-binary integer variables $x_i$ ($i\in\mathcal{I}\backslash\mathcal{B}$) with auxiliary binary variables $H_{h_1h_2}^i, t_{ih_1},t_{ih_2}$   (applying (\ref{eq:bexp}) and setting $\ell_i=0$): $x_i = \sum_{h=0}^{\lfloor \log_2(u_i)\rfloor}2^{h}t_{ih}, i\in \mathcal{I}\backslash\mathcal{B}$. Then
\[X_{ii} = x_i^2 = \sum_{h_1=0}^{\lfloor \log_2(u_i)\rfloor}\sum_{h_2=0}^{\lfloor \log_2(u_i)\rfloor}2^{h_1 + h_2}H_{h_1h_2}^i, H_{h_1h_2}^i := t_{ih_1}t_{ih_2}.\]
The binary bilinear term $H_{h_1h_2}^i = t_{ih_1}t_{ih_2}$ can be linearized (exactly) by RLT inequalities~(\ref{eq:brlt}), i.e.,
\begin{equation*}
    \begin{aligned}
        H_{h_1h_2}^i &= t_{ih_1}t_{ih_2},\\
        H_{h_1h_2}^i &\leq t_{ih_j}, j =1,2\\
        H_{h_1h_2}^i &\geq t_{ih_1} + t_{ih_2} - 1.
    \end{aligned}
\end{equation*}
As a result, $X_{ii}$ for $i\in\mathcal{I}\backslash \mathcal{B}$ can be linearized (exactly) as follows.
\begin{equation}
    \begin{aligned}
        \mathcal{H}(x_i):=\{X_{ii}\ |\ X_{ii} = &\sum_{h_1=0}^{\lfloor \log_2(u_i)\rfloor}\sum_{h_2=0}^{\lfloor \log_2(u_i)\rfloor}2^{h_1 + h_2}H_{h_1h_2}^{i},\\
        H_{h_1h_2}^i &\leq t_{ih_j}, j =1,2\\
        H_{h_1h_2}^i &\geq t_{ih_1} + t_{ih_2} - 1,\\
        x_i &= \sum_{h=0}^{\lfloor \log_2(u_i)\rfloor}2^{h}t_{ih},\\
        H_{h_1h_2}^i, &t_{ih_1},t_{ih_2}\in\{0,1\}
    \end{aligned}
\end{equation}
In our setting we must consider also continuous variables,  i.e., $i\notin\mathcal{I}$. Here we apply RLT (\ref{eq:brlt}) to $X_{ii} = x_{i}^2$ in order to obtain a relaxation. 

Now, plugging $P^k := -\lambda^k I$ into (\ref{eq:convex_miqcqp}) and linearize $X_{ii} = x_i^2$ for all $i=1,...,n$, we have the following relaxation:
\begin{subequations}\label{eq:eigen_I}
    \begin{align}
        \min\ &x^T(Q^0 - \lambda^0I)x + a_0^Tx + b_0 + \lambda^0\sum_{i=1}^n X_{ii} \label{eq:eigen_I_obj}\\
        \mbox{s.t.}\quad &x^T(Q^k -\lambda^k I)x + a_k^Tx + \lambda^k\sum_{i=1}^n X_{ii} \leq b_k,\ \forall k\in\{1,...,m_1\} \label{eq:eigen_I_qcon}\\
        &Ax \leq b^A,\label{eq:eigen_I_linear}\\
        &X_{ii} = x_i, \forall i\in\mathcal{B},\\
        &X_{ii} \in \mathcal{H}(x_i), \forall i\in\mathcal{I}\backslash \mathcal{B},\\
        &X_{ii} \leq u_ix_i, \forall i\notin\mathcal{I} \label{eq:rlt1}\\
        &X_{ii} \geq 2u_ix_i-u^2_i, \forall i\notin\mathcal{I} \label{eq:rlt2}\\
        &X_{ii} \geq 0, \forall i\notin\mathcal{I}\label{eq:rlt3}\\
        &0 \leq x \leq u,\\
        &x_j\in \mathbb{Z},\ \forall j\in\mathcal{I}\label{eq:eigen_I_end}.
    \end{align}
\end{subequations}

We can apply some variable reduction to this formulation. For any feasible solution $(x, X)$ in Problem~(\ref{eq:eigen_I}), we can fix $X_{ii}$ for $i\notin\mathcal{I}$ to its upper bound in (\ref{eq:rlt1}), i.e., $\bar{X}_{ii} := u_ix_i$, and keep $\bar{X}_{ii} := X_{ii}$ for $i\in\mathcal{I}$. Such fixing is valid in the following sense. The solution $(x, \bar{X})$ satisfies (\ref{eq:eigen_I_linear}-\ref{eq:eigen_I_end}). Given that $\lambda^k \leq 0$ for all $k\in\{0,1,...,m\}$, we can see that
\begin{equation}\label{eq:project_relation}
    \lambda^k\sum_{i\in\mathcal{I}} \bar{X}_{ii} + \lambda^k\sum_{i\notin\mathcal{I}} \bar{X}_{ii} \leq \lambda^k\sum_{i\in\mathcal{I}} X_{ii} + \lambda^k\sum_{i\notin\mathcal{I}} X_{ii}, \forall k=0,...,m.
\end{equation}
Equation~(\ref{eq:project_relation}) for $k=1,...,m$ shows $(x, \bar{X})$ satisfies (\ref{eq:eigen_I_qcon}) and hence $(x, \bar{X})$ is feasible in Problem~(\ref{eq:eigen_I}). Moreover, Equation~(\ref{eq:project_relation}) with $k=0$ implies the objective value of $(x, \bar{X})$ in (\ref{eq:eigen_I_obj}) is no greater than the objective value of $(x, X)$. Therefore, if Problem~(\ref{eq:eigen_I}) is feasible, there is at least one optimal solution $(x^*,X^*)$ such that $X_{ii}^* = u_ix_i^*$ for $i\notin\mathcal{I}$. 

Applying the fixings $X_{ii} := u_ix_i$ for $i\notin\mathcal{I}$ to Problem~(\ref{eq:eigen_I}) gives us the following reformulation:

\begin{subequations}\label{eq:eigen_miqcqp}
    \begin{align}
        \min\ &x^T(Q^0 - \lambda^0 I)x + a_0^Tx + b_0 + \lambda^0\sum_{i\in\mathcal{I}} X_{ii} + \lambda^0\sum_{i\notin\mathcal{I}} u_ix_i \label{eq:eigen_miqcqp_obj} \\
        \mbox{s.t.}\quad &x^T(Q^k - \lambda^k I)x + a_k^Tx + \lambda^k\sum_{i\in\mathcal{I}} X_{ii}\notag\\ 
        &+ \lambda^k\sum_{i\notin\mathcal{I}} u_ix_i \leq b_k,\ \forall k\in\{1,...,m_1\}\label{eq:eigen_miqcqp_qcon}\\
        &Ax \leq b^A\\
        &X_{ii} = x_i, \forall i\in\mathcal{B},\\
        &X_{ii} \in \mathcal{H}(x_i), \forall i\in\mathcal{I}\backslash \mathcal{B},\\
        &0 \leq x \leq u,\\
        &x_j\in \mathbb{Z},\ \forall j\in\mathcal{I}\label{eq:eigen_miqcqp_end}
    \end{align}
\end{subequations}

Problem~(\ref{eq:eigen_I}) is a relaxation to the original MIQCP~(\ref{eq:miqcqp}), and all feasible $x$ from (\ref{eq:eigen_I}) are also feasible in Problem~(\ref{eq:eigen_miqcqp}); thus Problem~(\ref{eq:eigen_miqcqp}) is also a relaxation to the original MIQCP~(\ref{eq:miqcqp}).

\subsection{Eigenvalue-based Approximation} \label{sec:eigen_method}

In this subsection we adapt the relaxation~(\ref{eq:eigen_miqcqp}) to develop principled mixed-integer convex approximations for MIQCPs. These approximations are used (via fixed-point method) to find feasible solutions for MIQCPs. The principle approach is to apply interval search over the variable bounds. Namely, we tighten the upper bounds, replacing $u_ix_i$ in (\ref{eq:eigen_miqcqp_obj}) and (\ref{eq:eigen_miqcqp_qcon}) with some $\hat{u}_ix_i$, where $0\leq \hat{u}\leq u$. This gives us the following $\hat{u}$-parameterized approximation:

\begin{subequations}\label{eq:approx_miqcqp}
    \begin{align}
        \mbox{Approx}(\hat{u}) = &\min_x \ x^T(Q^0 - \lambda^0 I)x + a_0^Tx + c + \lambda^0\sum_{i\in\mathcal{I}} X_{ii} + \lambda^0\sum_{i\notin\mathcal{I}} \hat{u}_ix_i \label{eq:approx_miqcqp_obj} \\
        \mbox{s.t.}\quad &x^T(Q^k - \lambda^k I)x + a_k^Tx + \lambda^k\sum_{i\in\mathcal{I}} X_{ii}\notag \\
        &+ \lambda^k\sum_{i\notin\mathcal{I}} \hat{u}_ix_i \leq b_k,\ \forall k\in\{1,...,m_1\} \label{eq:approx_miqcqp_qcon}\\
        &Ax \leq b^A\\
        &X_{ii} = x_i, \forall i\in\mathcal{B},\\
        &X_{ii} \in \mathcal{H}(x_i), \forall i\in\mathcal{I}\backslash \mathcal{B},\\
        &0 \leq x \leq u,\\
        &x_j\in \mathbb{Z},\ \forall j\in\mathcal{I},
    \end{align}
\end{subequations}


Let $(x(\hat{u}), X(\hat{u}))$ be a feasible solution to $\mbox{Approx}(\hat{u})$.  This approximation is constructed such that any feasible solution for which $\hat{u}_i \leq x_i(\hat{u}) \forall i\notin\mathcal{I}$ is also feasible to the original problem. This can be seen as follows. If $\hat{u}_i \leq x_i(\hat{u}) \forall i\not\in \mathcal{I}$, then together with the observations that $0\leq \hat{u}_ix_i(\hat{u}) \leq x_i(\hat{u})^2$ and $\lambda^k \leq 0$, we have
\begin{equation*}
    \begin{aligned}
        &x(\hat{u})^T(Q^k - \lambda^k I)x(\hat{u}) + a_k^Tx(\hat{u}) +\lambda^k\sum_{i\in\mathcal{I}} X_{ii}(\hat{u})+ \lambda^k\sum_{i\notin\mathcal{I}} \hat{u}_ix_i(\hat{u})\leq b_k,\\
        \implies &x(\hat{u})^T(Q^k - \lambda^k I)x(\hat{u}) + a_k^Tx(\hat{u}) + \lambda^k\sum_{i\in\mathcal{I}} X_{ii}(\hat{u})+ \lambda^k\sum_{i\notin\mathcal{I}} x_i(\hat{u})^2\leq b_k,\\
        \implies &x(\hat{u})^T(Q^k - \lambda^k I)x(\hat{u}) + a_k^Tx(\hat{u}) + \lambda^k\sum_{i=1}^n x_i(\hat{u})^2 \leq b_k,\\
        \implies &x(\hat{u})^T(Q^k)x(\hat{u}) + a_k^Tx(\hat{u})\leq b_k;
    \end{aligned}
\end{equation*}
and so $x(\hat{u})$ is a feasible solution to MIQCP~(\ref{eq:miqcqp}). 

Indeed, we can apply fixed-point iterations to find such a feasible solution (see e.g. \cite[Chapter 7]{ortega2000iterative}: Starting from an initial point $\hat{u}^0$, generate a sequence $\{\hat{u}^k\}$ by finding $x(\hat{u}^k)=\mbox{Approx}(\hat{u}^k)$ and setting the next iterate $\hat{u}_i^{k+1} := x_i(\hat{u}^k) \forall i\notin \mathcal{I}$. This continues until the consistency condition $\hat{u}_i^{k} \leq x_i(\hat{u}^k) \forall i\notin \mathcal{I}$ is attained. In Section~\ref{sec:MIQCQPproj}, we apply this fixed-point idea with acceleration (see \cite{walker2011anderson}): $\hat{u}_i^{k+1} = \alpha \hat{u}_i^k + (1-\alpha)x_i(\hat{u}^{k})$ for some $\alpha\in(0,1]$ for all $i\notin \mathcal{I}$.

\subsection{Approximation for MI(B)QPs}\label{sec:eigen_method_miqp}

If the problem is MIBQP or MIQP (no quadratic constraints) then the approximation design problem is underdetermined in the sense that for any $\hat{u}$ $\mbox{Approx}(\hat{u})$ shares the same feasible region (over $x$) as the original problem. In this case we focus more on objective value than feasibility. Namely, we modify the fixed-point method in Section~\ref{sec:eigen_method} to try to find some $\hat{u}$ such that (\ref{eq:approx_miqcqp_obj}) shares the same optimal solution to (\ref{eq:cMIQCP_obj})---the goal being that solving $\mbox{Approx}(\hat{u})$ can (hopefully) solve or at least provide a near-optimal solution to the original MIQP. We develop the following proposition to guide us on how to design such $\hat{u}$. 

\begin{proposition}\label{prop:fixed-point}
    Define $\mathrm{Q}_{\mathrm{MIQP}}:\min_x\ x^TQ^0x + a^Tx$ with $n_\alpha$ binary variables $x_\alpha$, $n_\beta$ continuous variables $x_\beta \geq 0$, and
    \[Q^0=\begin{bmatrix}
            Q_\alpha\ Q_\gamma \\
            Q_\gamma^T\ Q_\beta
        \end{bmatrix}, a = \begin{bmatrix}
            a_\alpha\\
            a_\beta
        \end{bmatrix}.\]
    Let $x(\hat{u})=(x_\alpha(\hat{u}), x_\beta(\hat{u}))$ be an optimal solution to $\mathrm{Approx}(\hat{u})$ for $\mathrm{Q}_{\mathrm{MIQP}}$. If $\hat{u}_\beta=2x_\beta(\hat{u})$, then $x_\beta(\hat{u})$ satisfies stationarity of $\mathrm{Q}_{\mathrm{MIQP}}$ for fixed binary variables $x_\alpha(\hat{u})$.
\end{proposition}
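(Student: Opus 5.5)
We write out $\mathrm{Approx}(\hat u)$ for $\mathrm{Q}_{\mathrm{MIQP}}$ explicitly and compare its first-order conditions in the continuous block with those of $\mathrm{Q}_{\mathrm{MIQP}}$ once the binaries are fixed. Since there are no quadratic constraints, only the objective (\ref{eq:approx_miqcqp_obj}) changes. With $\lambda:=\lambda^0\le 0$, the binary diagonal terms satisfy $X_{ii}=x_i$, so the approximate objective is
\[
f_{\hat u}(x)=x^T(Q^0-\lambda I)x+a^Tx+\lambda\sum_{i\in\alpha}x_i+\lambda\sum_{i\in\beta}\hat u_i x_i .
\]
Fix $x_\alpha=\bar x_\alpha:=x_\alpha(\hat u)$. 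Because $x(\hat u)$ is optimal for $\mathrm{Approx}(\hat u)$, the vector $x_\beta(\hat u)$ minimizes $f_{\hat u}(\bar x_\alpha,\cdot)$ over the continuous feasible set $x_\beta\ge 0$, together with any bounds $x_\beta\le u_\beta$ that are present. This restricted problem is convex, since $Q^0-\lambda I\succeq 0$ implies $Q_\beta-\lambda I\succeq 0$. Hence $x_\beta(\hat u)$ satisfies the KKT conditions of the restricted approximate problem:
\[
2Q_\gamma^T\bar x_\alpha+2(Q_\beta-\lambda I)x_\beta+a_\beta+\lambda\hat u_\beta-\mu+\nu=0,
\]
with multipliers $\mu,\nu\ge 0$ complementary to the bounds. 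Linearity of the constraints guarantees that these multipliers exist.

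The second step is to substitute $\hat u_\beta=2x_\beta(\hat u)$. Then $-2\lambda x_\beta+\lambda\hat u_\beta=0$, so the gradient of $f_{\hat u}$ with respect to $x_\beta$ at $x(\hat u)$ equals
\[
2Q_\gamma^T\bar x_\alpha+2Q_\beta x_\beta+a_\beta=\nabla_{x_\beta}\bigl(x^TQ^0x+a^Tx\bigr)\big|_{x=x(\hat u)},
\]
which is exactly the partial gradient of the original objective. The constraints on $x_\beta$ are the same in both problems, so the same multipliers $(\mu,\nu)$ satisfy the KKT (stationarity and complementarity) conditions of $\mathrm{Q}_{\mathrm{MIQP}}$ with $x_\alpha$ fixed at $\bar x_\alpha$. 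This is the claim.

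The heuristic reason for the factor $2$ is that $\lambda\hat u_i x_i$ is intended to mimic $\lambda x_i^2$, whose derivative is $2\lambda x_i$. Matching derivatives, rather than values, therefore requires $\hat u_i=2x_i$.

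Most of the argument is routine. The main point needing care is justifying that global optimality of $x(\hat u)$ for the mixed-integer approximation implies KKT for the continuous subproblem at fixed binaries. The justification is that $x(\hat u)$ is in particular optimal over the slice $x_\alpha=\bar x_\alpha$. That slice problem is a convex QP with linear constraints, so KKT holds without any further constraint qualification. One must also state clearly that ``stationarity'' is meant in the KKT sense relative to the bounds on $x_\beta$, and that any general linear constraints $Ax\le b^A$ carry over identically with the same multipliers.
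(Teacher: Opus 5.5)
Your proof is correct and follows the same route as the paper. Both fix the binaries at $x_\alpha(\hat u)$, write the first-order optimality conditions of the resulting convex slice of $\mathrm{Approx}(\hat u)$, and observe that substituting $\hat u_\beta = 2x_\beta(\hat u)$ cancels the $-2\lambda x_\beta$ and $\lambda\hat u_\beta$ terms, so the partial gradient coincides with $\nabla_{x_\beta}(x^TQ^0x+a^Tx)$. Your KKT-multiplier packaging additionally covers upper bounds and linear constraints, which the paper only mentions in a remark after the proof.
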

\begin{proof}
    Let
    \[f(x) := x^TQ^0x + a^Tx = x_\alpha^T Q_\alpha x_\alpha + 2x_\alpha^T Q_\gamma x_\beta + x_\beta^T Q_\beta x_\beta + a_\alpha^Tx_\alpha+a_\beta^Tx_\beta.\]
    Suppose $x^*=(x^*_\alpha,x^*_\beta)$ is an optimal solution to $\mathrm{Q}_{\mathrm{MIQP}}$. Fixing $x_\alpha = x_\alpha^*$, the first-order stationarity condition of $x_\beta^* = \arg\min\{f(x):x_\alpha = x_\alpha^*, x_\beta\geq 0\}$ is
    \begin{equation}\label{eq:opt_condition}
        \left\{\begin{aligned}
        \nabla_{(x_\beta)_i} f(x^*) = 0, &\mbox{ if } (x_\beta^*)_i> 0\\
        \nabla_{(x_\beta)_i} f(x^*) \geq 0, &\mbox{ if } (x_\beta^*)_i= 0
    \end{aligned} \right.
    \end{equation}
    where
    \begin{equation*}
       \nabla_{x_\beta} f(x^*) =  2Q_\beta x^*_\beta + 2Q_\gamma^T x_\alpha^* +a_\beta.
    \end{equation*}

    Now, let us write $\mbox{Approx}(\hat{u}):=\arg\min\{g(x, \hat{u})\}$, where \[g(x, \hat{u}):=x^T(Q^0-\lambda_{\min}^0 I)x+a^Tx+\lambda^0_{\min}\hat{u}_\beta^Tx_\beta + \lambda_{\min}^0\mathbf{1}^T x_\alpha.\] Fixing $x_{\alpha} = x_\alpha(\hat{u})$, $g(x,\hat{u})$ is convex from $Q^0-\lambda_{\min}^0 I \succeq 0$, and we have the first-order optimality condition
    \[\left\{\begin{aligned}
        \nabla_{(x_\beta)_i} g(x(\hat{u}), \hat{u}) = 0, &\mbox{ if } (x_\beta(\hat{u}))_i> 0\\
        \nabla_{(x_\beta)_i} g(x(\hat{u}), \hat{u}) \geq 0, &\mbox{ if } (x_\beta(\hat{u}))_i= 0
    \end{aligned} \right.,\]
    where
    \[\nabla_{x_\beta}g(x(\hat{u}), \hat{u}) = 2(Q_\beta-\lambda_{\min}^0 I_\beta)x_\beta(\hat{u}) + 2Q_\gamma^T x_\alpha(\hat{u}) + a_\beta +\lambda_{\min}^0\hat{u}_\beta.\]
    Plugging in the relation $\hat{u}_\beta=2x_\beta(\hat{u})$,
    \begin{equation*}
        \begin{aligned}
            \nabla_{x_\beta}g(x(\hat{u}), \hat{u}) &= 2(Q_\beta-\lambda_{\min}^0 I_\beta)x_\beta(\hat{u}) +2Q_\gamma^T x_\alpha(\hat{u}) + a_\beta +2\lambda_{\min}^0 x_\beta(\hat{u})\\
            &=2Q_\beta x_\beta(\hat{u})+2Q_\gamma^T x_\alpha(\hat{u}) + a_\beta\\
            &=\nabla_{x_\beta} f(x(\hat{u})).
        \end{aligned}
    \end{equation*}
    which implies $(x_\alpha(\hat{u}), x_\beta(\hat{u}))$ satisfies the stationarity condition~(\ref{eq:opt_condition}).
    \qed
\end{proof}

Note that we drop constraints in the Proposition for analytical tractability, but this idea for choosing $\hat u$ can be applied for more general linearly-constrained MIQP (moreover we can replace general integer variables with auxiliary binary variables as in Section~\ref{sec:org_eig_method}). 

\subsection{Redesigning Perturbations} \label{sec:matrix_select}

In this subsection, we consider the selection of $\lambda^k$ for the perturbing matrix $P^k:=-\lambda^k I$. As we mentioned before, the classic eigenvalue method \cite{hammer1970some} sets $\lambda^k :=\min\{0,\lambda_{\min}^k\}$, where $\lambda_{\min}^k$ is the minimum eigenvalue of $Q^k$. However, such a choice of $\lambda^k$ can be pathological in the context of our fixed-point-based approximations, as shown in Proposition~\ref{prop:org_lambda}.

\begin{proposition} \label{prop:org_lambda}
    Consider a nonconvex (symmetric) $\mathrm{MIQP}$, 
    \[\mathrm{Q}_{\mathrm{MIQP}}: \min_x\ x^TQ^0x + a^Tx\]
    with $n_\alpha$ binary variables $x_\alpha$, $n_\beta$ continuous variables $x_\beta\geq 0$, and
    \[Q^0=\begin{bmatrix}
            Q_\alpha\ Q_\gamma \\
            Q_\gamma^T\ Q_\beta
        \end{bmatrix}, a = \begin{bmatrix}
            a_\alpha\\
            a_\beta
        \end{bmatrix}.\]
    Let $\lambda_{\min}^0$ denote the minimum eigenvalue of $Q^0$, and denote the maximum and minimum eigenvalues of $Q_\beta$ as $\overline{\lambda}_\beta$ and $\underline{\lambda}_\beta$, respectively. 

Now, suppose there exists an optimal solution to the following perturbed problem \[\begin{split}x^*\in\arg\min\{x^T(Q^0-\lambda_{\min}^0I)x+a^Tx +\lambda_{\min}^0\hat{u}_\beta^Tx_\beta + \lambda_{\min}^0\mathbf{1}^T x_\alpha: \\x_\alpha\in\{0,1\}^{n_\alpha}, x_\beta\geq 0\}.\end{split}\] 
    
    Let $\beta^\prime :=\{i\in\beta: x_i^* > 0\}$. Then if, furthermore, $|\beta^\prime| = n_{\beta^\prime} > 0$, then there exists a corresponding continuous component $x^*_\beta$ that satisfies the following:
    \begin{enumerate}
        \item if $\lambda_{\min}^0 = \underline{\lambda}_\beta =\overline{\lambda}_\beta$, then $\|x^*_\beta\|$ can be arbitrarily large;
        \item if $\lambda_{\min}^0 = \underline{\lambda}_\beta <\overline{\lambda}_\beta$, then
        \[\frac{\lambda_{\min}^0}{2(\lambda_{\min}^0 - \overline{\lambda}_\beta)}\bigl| \|\hat{u}_{\beta^\prime}\| - \Omega \bigr| \leq \|x_\beta^*\| ;\]
        \item otherwise (i.e. $\lambda_{\min}^0 < \underline{\lambda}_\beta$), 
        \[\frac{\lambda_{\min}^0}{2(\lambda_{\min}^0 - \overline{\lambda}_\beta)}\bigl| \|\hat{u}_{\beta^\prime}\| - \Omega \bigr| \leq \|x_\beta^*\| \leq \frac{\lambda_{\min}^0}{2(\lambda_{\min}^0 - \underline{\lambda}_\beta)}(\|\hat{u}_{\beta^\prime}\| + \Omega),\]
    \end{enumerate}
    where $\Omega := \bigl\|(2Q_\gamma^T x_\alpha^* + a_{\beta})_{\beta^\prime}/\lambda_{\min}^0\bigr\|$; otherwise, $x_\beta^* = 0$.
\end{proposition}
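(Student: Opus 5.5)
Following the proof of Proposition~\ref{prop:fixed-point}, we fix the binary part at $x_\alpha^*$ and study the optimality conditions of the resulting problem in $x_\beta$. With $x_\alpha = x_\alpha^*$ fixed, the perturbed objective is a quadratic in $x_\beta\geq 0$ with Hessian $2(Q_\beta - \lambda_{\min}^0 I_\beta)$. This matrix is positive semidefinite because $\lambda_{\min}^0 \leq \underline{\lambda}_\beta$: the minimum eigenvalue of a principal submatrix is at least the minimum eigenvalue of the full matrix, by Cauchy interlacing or the Rayleigh quotient. Hence the KKT conditions are necessary and sufficient.

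On the support $\beta'$, stationarity gives
\[2(Q_{\beta'\beta'} - \lambda_{\min}^0 I)x^*_{\beta'} + (2Q_\gamma^T x_\alpha^* + a_\beta)_{\beta'} + \lambda_{\min}^0 \hat{u}_{\beta'} = 0.\]
Components of $x^*_\beta$ outside $\beta'$ vanish, so $\|x^*_\beta\| = \|x^*_{\beta'}\|$. If $\beta' = \emptyset$, then $x^*_\beta = 0$, which is the final ``otherwise'' clause. Setting $M := Q_{\beta'\beta'} - \lambda_{\min}^0 I$, we get
\[2Mx^*_{\beta'} = -\lambda_{\min}^0\bigl(\hat{u}_{\beta'} + (2Q_\gamma^T x_\alpha^* + a_\beta)_{\beta'}/\lambda_{\min}^0\bigr) =: -\lambda_{\min}^0 v,\]
where $\|v\|$ lies between $\bigl|\|\hat{u}_{\beta'}\| - \Omega\bigr|$ and $\|\hat{u}_{\beta'}\| + \Omega$ by the triangle inequality. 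Note the sign conventions: $\lambda_{\min}^0<0$, so the ratios $\lambda_{\min}^0/(2(\lambda_{\min}^0 - \overline{\lambda}_\beta))$ are positive.

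The eigenvalues of $M$ lie in $[\underline{\lambda}_{\beta'} - \lambda_{\min}^0, \overline{\lambda}_{\beta'} - \lambda_{\min}^0]$, and by interlacing this interval sits inside $[\underline{\lambda}_\beta - \lambda_{\min}^0, \overline{\lambda}_\beta - \lambda_{\min}^0]$. This gives the two-sided bound
\[(\underline{\lambda}_\beta - \lambda_{\min}^0)\|x^*_{\beta'}\| \leq \|Mx^*_{\beta'}\| \leq (\overline{\lambda}_\beta - \lambda_{\min}^0)\|x^*_{\beta'}\|.\]
The cases then follow.

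\textbf{Case 3} ($\lambda_{\min}^0<\underline{\lambda}_\beta$). $M$ is positive definite, and both inequalities combine with $\|Mx^*\| = |\lambda_{\min}^0|\|v\|/2$ to give the two-sided estimate. The lower bound comes from the largest eigenvalue and the upper bound from the smallest.

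\textbf{Case 2} ($\lambda_{\min}^0 = \underline{\lambda}_\beta < \overline{\lambda}_\beta$). Only the upper estimate $\|Mx\| \leq (\overline{\lambda}_\beta - \lambda_{\min}^0)\|x\|$ survives, which yields the lower bound on $\|x^*_\beta\|$.

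\textbf{Case 1} ($\lambda_{\min}^0 = \underline{\lambda}_\beta = \overline{\lambda}_\beta$). Here $Q_\beta = \lambda_{\min}^0 I$, so $M = 0$, and the objective restricted to $x_{\beta'}$ is linear with vanishing gradient on the support. Any point along a positive direction within $\beta'$ stays optimal, so we can exhibit optimal $x^*_\beta$ of arbitrarily large norm; this is the ``there exists'' phrasing in the statement.

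The main obstacle is making the existence quantifier and the support interplay rigorous. When $M$ is singular, the optimal set is a face that may be unbounded. We must choose $x^*_\beta$ appropriately: in Case 1, move along $\beta'$ while keeping positivity, so the support stays $\beta'$ and the KKT conditions for indices outside $\beta'$ are unchanged because the gradient there does not change when $Q_\beta$ is a multiple of the identity. We must also check that restricting eigenvalue bounds from $Q_\beta$ to the submatrix $Q_{\beta'\beta'}$ points the inequalities in the right directions.
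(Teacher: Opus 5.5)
Your proposal is correct and follows essentially the same route as the paper: fix $x_\alpha^*$, write stationarity on the support $\beta'$, bound $\|(Q_\beta-\lambda_{\min}^0 I_\beta)_{\beta'\beta'}x^*_{\beta'}\|$ between $(\underline{\lambda}_\beta-\lambda_{\min}^0)\|x^*_\beta\|$ and $(\overline{\lambda}_\beta-\lambda_{\min}^0)\|x^*_\beta\|$ via interlacing, and finish with the reverse and ordinary triangle inequalities. Your Case 1 argument also matches the paper's degenerate linear-objective case, and it is slightly tighter: you use the vanishing gradient on $\beta'$ to scale along the support, where the paper enumerates the signs of the LP coefficients.
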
 
\begin{proof}
    Let $Q:= Q^0-\lambda_{\min}^0I = \begin{bmatrix}
            Q_\alpha-\lambda_{\min}^0I_\alpha, &Q_\gamma \\
            Q_\gamma^T, &Q_\beta-\lambda_{\min}^0I_\beta
        \end{bmatrix}$. We have 
        \[x^TQx = x_\alpha^T (Q_\alpha -\lambda_{\min}^0I_\alpha) x_\alpha + 2x_\alpha^T Q_\gamma x_\beta + x_\beta^T (Q_\beta-\lambda_{\min}^0I_\beta)x_\beta.\]
        Furthermore, we can split $a^Tx = a^T_\alpha x_\alpha + a^T_\beta x_\beta$ and get
        \[a^Tx+\lambda_{\min}^0\hat{u}_\beta^Tx_\beta + \lambda_{\min}^0\mathbf{1}^T x_\alpha = (a_\alpha + \lambda_{\min}^0\mathbf{1})^Tx_\alpha + (a_\beta + \lambda_{\min}^0\hat{u}_\beta)^Tx_\beta.\]
        Let $x_\alpha^* \in\{0,1\}^{n_\alpha}$ denote some optimal assignment of the binary variables, and $f(x_\beta) := x_\beta^T (Q_\beta-\lambda_{\min}^0I_\beta)x_\beta +2{x_\alpha^*}^T Q_\gamma x_\beta + (a_\beta+\lambda_{\min}^0\hat{u}_\beta)^Tx_\beta$. Then, 
        \begin{equation}\label{eq:xbeta_val}
            x_\beta^* = \arg\min\{f(x_\beta): x_\beta\geq 0\},
        \end{equation} 
        where $f$ is convex. Any optimal solution values of (\ref{eq:xbeta_val})  should satisfy the first-order optimality condition:
        \begin{equation}\label{eq:opt_condition_2}
            \left\{ \begin{aligned}
            \nabla_{(x_\beta)_i}f(x^*_\beta) = 0, &\mbox{ if } (x^*_\beta)_i > 0\\
            \nabla_{(x_\beta)_i}f(x^*_\beta) \geq 0, &\mbox{ if } (x^*_\beta)_i = 0
            \end{aligned}\right. .
        \end{equation}
        
        ~\\
        \textbf{Case 1.} Suppose $\lambda_{\min}^0 = \underline{\lambda}_\beta =\overline{\lambda}_\beta$. Here, $Q_\beta$ is symmetric with all identical eigenvalues, and so $Q_\beta = \lambda_{\min}^0I$. Plugging $Q_\beta-\lambda_{\min}^0I_\beta = 0$ into (\ref{eq:xbeta_val}), we have $x_\beta^* = \arg\min\{2{x_\alpha^*}^T Q_\gamma x_\beta + (a_\beta + \lambda_{\min}^0\hat{u}_\beta)^Tx_\beta : x_\beta \geq 0\}$, and then the problem reduces to a linear program. If the linear coefficients are all positive, the optimal solution $x_\beta^*$ is 0, which implies $n_{\beta^\prime} = 0$; otherwise, if some of the coefficients are negative, the problem is unbounded below; or if the coefficients are all zero, there are infinite optimal solutions, which can be arbitrarily large.
        
        ~\\
        \textbf{Case 2.} Suppose $\lambda_{\min}^0 = \underline{\lambda}_\beta <\overline{\lambda}_\beta$. Observe that $\lambda_{\min}^0 \leq \underline{\lambda}_\beta$ implies $Q_\beta- \lambda_{\min}^0I_\beta \succeq 0$, and the minimum and maximum eigenvalues of $Q_\beta-\lambda_{\min}^0I_\beta $ are $\underline{\lambda}_\beta -\lambda_{\min}^0$ and $\overline{\lambda}_\beta -\lambda_{\min}^0$, respectively. Let $(Q_\beta-\lambda_{\min}^0I_{\beta})_{\beta^\prime\beta^\prime}$ be the principal submatrix for indexes $\beta^\prime$. From (\ref{eq:opt_condition_2}), we have $\nabla_{x_{\beta^\prime}}f(x^*_{\beta^\prime}) = 0$, or
        \[2(Q_{\beta}- \lambda_{\min}^0I_{\beta})_{\beta^\prime\beta^\prime}x_{\beta^\prime}^* + (2Q_\gamma^T x_\alpha^* + a_\beta)_{\beta^\prime}+\lambda_{\min}^0\hat{u}_{\beta^\prime} = 0,\]
        \begin{equation}\label{eq:xbeta}
            \implies (Q_{\beta}- \lambda_{\min}^0I_{\beta})_{\beta^\prime\beta^\prime}x_{\beta^\prime}^* =  -\frac{1}{2}\lambda_{\min}^0\hat{u}_{\beta^\prime}-(Q_\gamma^T x_\alpha^* + \frac{1}{2}a_{\beta})_{\beta^\prime}.
        \end{equation}
        Since $(Q_\beta-\lambda_{\min}^0I_\beta)_{\beta^\prime\beta^\prime}$ is symmetric, we can apply a spectral-norm bound on the matrix:
        \begin{equation*}
            (\underline{\lambda}_{\beta^\prime\beta^\prime})\|x_\beta^*\|\leq \|(Q_\beta-\lambda_{\min}^0I_\beta)_{\beta^\prime\beta^\prime}x_{\beta^\prime}^*\|\leq (\overline{\lambda}_{\beta^\prime\beta^\prime})\|x_\beta^*\|,
        \end{equation*}
      
        \begin{equation}\label{eq:rangexbeta}
            \implies (\underline{\lambda}_\beta -\lambda_{\min}^0)\|x_\beta^*\|\leq \|(Q_\beta-\lambda_{\min}^0I_\beta)_{\beta^\prime\beta^\prime}x_{\beta^\prime}^*\|\leq (\overline{\lambda}_\beta -\lambda_{\min}^0)\|x_\beta^*\|,
        \end{equation}
        where the implication follows from interlaced eigenvalues with $Q$.
        
        Plugging (\ref{eq:xbeta}) into the right-hand inequality of (\ref{eq:rangexbeta}) together with $\|x_\beta^*\| = \|x_{\beta^\prime}^*\|$,
        
        \[\|-\frac{1}{2}\lambda_{\min}^0\hat{u}_{\beta^\prime}-(Q_\gamma^T x_\alpha^* + \frac{1}{2}a_{\beta})_{\beta^\prime}\| \leq (\overline{\lambda}_\beta -\lambda_{\min}^0)\|x_{\beta^\prime}^*\| = (\overline{\lambda}_\beta -\lambda_{\min}^0)\|x_\beta^*\|,\]
        \begin{equation}\label{eq:lowerxbeta}
            \begin{aligned}
                \implies \|x_\beta^*\|&\geq \frac{\|-\lambda_{\min}^0\hat{u}_{\beta^\prime}-(2Q_\gamma^T x_\alpha^* + a_{\beta})_{\beta^\prime}\|}{2(\overline{\lambda}_\beta - \lambda_{\min}^0)}\\
                &=\frac{-\lambda_{\min}^0\|\hat{u}_{\beta^\prime} + (2Q_\gamma^T x_\alpha^* + a_{\beta})_{\beta^\prime}/\lambda_{\min}^0\|}{2(\overline{\lambda}_\beta - \lambda_{\min}^0)}
            \end{aligned}
        \end{equation}

        Applying the reverse triangle inequality to the norm in Equation~(\ref{eq:lowerxbeta}) yields 
        \begin{equation}\label{eq:ubeta_range}
            \begin{aligned}
                \bigl| \|\hat{u}_{\beta^\prime}\| 
                 - \bigl\|(2Q_\gamma^T x_\alpha^* + a_{\beta})_{\beta^\prime}/\lambda_{\min}^0\bigr\| \bigr|
                &\leq \bigl\|\hat{u}_{\beta^\prime} + (2Q_\gamma^T x_\alpha^* + a_{\beta})_{\beta^\prime}/\lambda_{\min}^0\bigr\|,
            \end{aligned}
        \end{equation}
        
        
        and so
        \begin{equation}\label{eq:claim2_res}
            \frac{\lambda_{\min}^0}{2(\lambda_{\min}^0 - \overline{\lambda}_\beta)}\bigl| \|\hat{u}_{\beta^\prime}\| 
                 - \Omega \bigr| \leq \|x_\beta^*\|.
        \end{equation}

        \textbf{Case 3.} Here, $\lambda_{\min}^0 < \underline{\lambda}_\beta$. The analysis for Case 2 still holds true for Case 3; thus, Equation~(\ref{eq:claim2_res}) is true. 
        
        Plugging Equation~(\ref{eq:xbeta}) into the left-hand inequality of (\ref{eq:rangexbeta}) together with $\|x_\beta^*\| = \|x_{\beta^\prime}^*\|$, we have
        \begin{equation}\label{eq:upperxbeta}
        \begin{aligned}
            \|x_\beta^*\|&\leq \frac{\|-\lambda_{\min}^0\hat{u}_{\beta^\prime}-(2Q_\gamma^T x_\alpha^* + a_{\beta})_{\beta^\prime}\|}{2(\underline{\lambda}_\beta -\lambda_{\min}^0)}\\
            &= \frac{-\lambda_{\min}^0\|\hat{u}_{\beta^\prime} +(2Q_\gamma^T x_\alpha^* + a_{\beta})_{\beta^\prime}/\lambda_{\min}^0\|}{2(\underline{\lambda}_\beta -\lambda_{\min}^0)}\\
            &= \frac{\lambda_{\min}^0 \|\hat{u}_{\beta^\prime} +(2Q_\gamma^T x_\alpha^* + a_{\beta})_{\beta^\prime}/\lambda_{\min}^0\|}{2(\lambda_{\min}^0 - \underline{\lambda}_\beta)} 
        \end{aligned}
        \end{equation}
        Applying the triangle inequality to the norm in Equation~(\ref{eq:upperxbeta}) gives us
        \begin{equation}\label{eq:claim3_res}
            \|x_\beta^*\|\leq \frac{\lambda_{\min}^0}{2(\lambda_{\min}^0 - \underline{\lambda}_\beta)}(\|\hat{u}_{\beta^\prime}\| + \Omega).
        \end{equation}\qed
\end{proof}

From Proposition~\ref{prop:org_lambda}, for $\mbox{Approx}(\hat{u})$ of MIQP, $\|x(\hat{u})\|$ can be far larger than $\|\hat{u}\|$, especially when $\lambda_{\min}^0 \approx \underline{\lambda}_\beta$ or $\lambda_{\min}^0 \approx \underline{\lambda}_\beta\approx \overline{\lambda}_\beta$. Therefore, the fixed-point iteration $\hat{u}_{i}^{k+1} = 2x_i(\hat{u}^k)$ for MIQP may not converge and $\hat{u}_i \rightarrow u_i$ for some $i\notin\mathcal{I}$.  Thus we provide another option for perturbation with Theorem~\ref{thm:good_lambda}.

\begin{theorem} \label{thm:good_lambda}
    Consider the MIQP problem defined in Proposition~\ref{prop:org_lambda}. Now, suppose there exists an optimal solution to the following perturbed problem \[\begin{split}x^*\in\arg\min\{x^T(Q^0-\lambda_{s}^0I)x+a^Tx +\lambda_{s}^0\hat{u}_\beta^Tx_\beta + \lambda_{s}^0\mathbf{1}^T x_\alpha: \\x_\alpha\in\{0,1\}^{n_\alpha}, x_\beta\geq 0\},\end{split}\] 
    where $\lambda_s:=\min\{2\underline{\lambda}_\beta, \lambda_{\min}^0\} - 1$.
    Moreover, define $\overline{\Theta}:=\max\{\|2Q_\gamma^T x_\alpha + a_\beta\|\ |\ x_\alpha \in \{0,1\}^{n_\alpha} \}$; $\beta^\prime:=\{i\in\beta: x_i^* > 0\}$; and $\Omega:= \bigl\|(2Q_\gamma^T x_\alpha^* + a_\beta)_{\beta^\prime}/\lambda_s\bigr\|$.
    
    Then $x^*$ satisfies the following bound
    \[\|x_\beta^*-\hat{u}_\beta\| \leq \frac{(2\overline{\lambda}_\beta - \lambda_s)\|\hat{u}_\beta\| + \overline{\Theta}}{2(\underline{\lambda}_\beta-\lambda_s)}\]
    
    Moreover, if $|\beta^\prime| > 0$, then \[\frac{\lambda_s}{2(\lambda_s - \overline{\lambda}_\beta)}|\|\hat{u}_{\beta^\prime}\| - \Omega| \leq \|x_\beta^*\| \leq \frac{\lambda_s}{2(\lambda_s - \underline{\lambda}_\beta)}(\|\hat{u}_{\beta^\prime}\| + \Omega).\] 
\end{theorem}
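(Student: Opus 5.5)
We follow the proof of Proposition~\ref{prop:org_lambda}, with $\lambda_{\min}^0$ replaced by $\lambda_s$, and add a direct estimate of $\|x_\beta^*-\hat{u}_\beta\|$.

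\textbf{Step 1 (reduce to a convex problem in $x_\beta$).} Since $\lambda_s\le\lambda_{\min}^0-1<\lambda_{\min}^0$, the matrix $Q^0-\lambda_sI$ is positive definite. Its principal block $Q_\beta-\lambda_sI_\beta$ has eigenvalues in $[\underline{\lambda}_\beta-\lambda_s,\ \overline{\lambda}_\beta-\lambda_s]$. The lower end is strictly positive because $\lambda_s\le 2\underline{\lambda}_\beta-1$ and $\lambda_s\le\lambda_{\min}^0-1\le\underline{\lambda}_\beta-1$ by interlacing; hence $\underline{\lambda}_\beta-\lambda_s\ge1$. Fix the optimal binary part $x_\alpha^*$. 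Then $x_\beta^*$ minimizes the strictly convex function
\[
f(x_\beta)=x_\beta^T(Q_\beta-\lambda_sI_\beta)x_\beta+2{x_\alpha^*}^TQ_\gamma x_\beta+(a_\beta+\lambda_s\hat{u}_\beta)^Tx_\beta
\]
over $x_\beta\ge0$. Writing $M:=Q_\beta-\lambda_sI_\beta$ and $c:=2Q_\gamma^Tx_\alpha^*+a_\beta$, the KKT conditions read $2Mx_\beta^*+c+\lambda_s\hat{u}_\beta=\mu\ge0$ with $\mu^Tx_\beta^*=0$.

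\textbf{Step 2 (the two-sided bound on $\|x_\beta^*\|$).} On $\beta'$ the gradient vanishes, so
\[
M_{\beta'\beta'}x_{\beta'}^*=-\tfrac12\bigl(\lambda_s\hat{u}_{\beta'}+c_{\beta'}\bigr).
\]
By interlacing, the eigenvalues of $M_{\beta'\beta'}$ also lie in $[\underline{\lambda}_\beta-\lambda_s,\ \overline{\lambda}_\beta-\lambda_s]$. Taking norms, factoring out $-\lambda_s>0$, and applying the reverse and ordinary triangle inequalities exactly as in Cases~2--3 of Proposition~\ref{prop:org_lambda} gives the stated bound. Case~1 cannot occur, because $\underline{\lambda}_\beta-\lambda_s>0$.

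\textbf{Step 3 (bound on $\|x_\beta^*-\hat{u}_\beta\|$).} Rewrite the KKT equation in terms of $d:=x_\beta^*-\hat{u}_\beta$:
\[
2Md=\mu-c-\lambda_s\hat{u}_\beta-2M\hat{u}_\beta=\mu-c-(2Q_\beta-\lambda_sI_\beta)\hat{u}_\beta .
\]
If $\mu=0$, we use $\|2Md\|\ge2(\underline{\lambda}_\beta-\lambda_s)\|d\|$, together with $\|c\|\le\overline{\Theta}$ and
\[
\|2Q_\beta-\lambda_sI_\beta\|\le 2\overline{\lambda}_\beta-\lambda_s .
\]
The last inequality holds because $\lambda_s\le2\underline{\lambda}_\beta-1$ makes $2Q_\beta-\lambda_sI_\beta\succ0$, so its spectral norm equals its largest eigenvalue; this is exactly where the choice of the factor $2$ in $\lambda_s$ enters. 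This yields the claimed bound.

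\textbf{Main obstacle: the multiplier $\mu$.} In general $\mu\neq0$, and I would handle it using monotonicity of the gradient instead of the equation. Strong convexity of $f$ gives
\[
(\nabla f(x^*_\beta)-\nabla f(y))^T(x^*_\beta-y)\ge2(\underline{\lambda}_\beta-\lambda_s)\|x^*_\beta-y\|^2 .
\]
Take $y=\hat{u}_\beta\ge0$ (assuming $\hat{u}\ge0$, as in the paper). The variational inequality for the minimizer, $\nabla f(x_\beta^*)^T(y-x_\beta^*)\ge0$, removes the $\mu$ term. What remains is
\[
2(\underline{\lambda}_\beta-\lambda_s)\|d\|^2\le-\nabla f(\hat{u}_\beta)^Td\le\|\nabla f(\hat{u}_\beta)\|\,\|d\|,
\]
and
\[
\nabla f(\hat{u}_\beta)=(2Q_\beta-\lambda_sI_\beta)\hat{u}_\beta+c
\]
is bounded by $(2\overline{\lambda}_\beta-\lambda_s)\|\hat{u}_\beta\|+\overline{\Theta}$. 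Dividing by $2(\underline{\lambda}_\beta-\lambda_s)\|d\|$ gives the first claim and avoids any case analysis on the active set.
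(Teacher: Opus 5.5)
Your proof is correct. Step~2 matches the paper, which invokes Case~3 of Proposition~\ref{prop:org_lambda} with $\lambda_s$ in place of $\lambda_{\min}^0$. For the first bound, however, you take a genuinely different route.

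The paper computes the unconstrained stationary point $\hat{x}_\beta$ from (\ref{eq:xbeta_2}) and bounds $\|\hat{x}_\beta-\hat{u}_\beta\|$ by spectral estimates. It then transfers this bound to $x_\beta^*$ through (\ref{eq:hat_opt}), i.e.\ by asserting that $x_\beta^*$ is the componentwise positive part of $\hat{x}_\beta$, which cannot move it farther from $\hat{u}_\beta\ge 0$. You instead test the variational inequality at the feasible point $\hat{u}_\beta$ and combine it with strong monotonicity of $\nabla f$. This gives $\|x_\beta^*-\hat{u}_\beta\|\le\|\nabla f(\hat{u}_\beta)\|/(2(\underline{\lambda}_\beta-\lambda_s))$ directly. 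It is the same quantity the paper bounds for $\hat{x}_\beta$, but you obtain it without touching the active set or the multiplier.

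What your route buys is rigor. (\ref{eq:hat_opt}) holds when $Q_\beta-\lambda_sI_\beta$ is diagonal. In general, though, $x_\beta^*$ is the projection of $\hat{x}_\beta$ onto the orthant in the norm induced by $Q_\beta-\lambda_sI_\beta$, not its Euclidean positive part. For instance, take $Q_\beta=\begin{bmatrix}18&18\\18&18\end{bmatrix}$, $Q_\alpha=-1$, $Q_\gamma=0$. Then $\lambda_{\min}^0=-1$, $\underline{\lambda}_\beta=0$, $\lambda_s=-2$, and $Q_\beta-\lambda_sI_\beta=\begin{bmatrix}20&18\\18&20\end{bmatrix}$. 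With linear term $(-4,4)^T$ in $f$, one gets $\hat{x}_\beta=(1,-1)^T$ but $x_\beta^*=(0.1,0)^T$, whereas the positive part $(1,0)^T$ has nonzero first gradient component.

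Repairing the paper's step via nonexpansiveness in that weighted norm would cost a factor of up to $\sqrt{(\overline{\lambda}_\beta-\lambda_s)/(\underline{\lambda}_\beta-\lambda_s)}$. Your argument keeps the stated constant.

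You also correctly trace $2Q_\beta-\lambda_sI_\beta\succ 0$ to $\lambda_s\le 2\underline{\lambda}_\beta-1$. The paper cites only $\lambda_s<\underline{\lambda}_\beta$, which does not suffice when $\underline{\lambda}_\beta<0$.
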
 

\begin{proof}
    From $\lambda_s<\lambda_{\min}^0 \leq \underline{\lambda}_\beta$, we have $Q_\beta-\lambda_sI_\beta \succ 0$; thus, the first-order optimality condition given by Equation~(\ref{eq:opt_condition_2}) holds. Consider $\hat{x}_\beta$ such that $\nabla_{\beta} f(\hat{x}_\beta) = 0$:
    \[2(Q_\beta- \lambda_s I_\beta)\hat{x}_\beta + 2Q_\gamma^T x_\alpha^* + a_\beta+\lambda_s\hat{u}_\beta = 0.\]
    \begin{equation}\label{eq:xbeta_2}
        \implies (Q_\beta- \lambda_sI_\beta)\hat{x}_\beta =  -\frac{1}{2}\lambda_s\hat{u}_\beta-Q_\gamma^T x_\alpha^* - \frac{1}{2}a_\beta.
    \end{equation}
    From first-order conditions~(\ref{eq:opt_condition_2}), we have
    \begin{equation}\label{eq:hat_opt}
            (x^*_\beta)_i = \left \{ \begin{aligned}&(\hat{x}_\beta)_i, &\mbox{ if } (\hat{x}_\beta)_i > 0\\
            &0, &\mbox{ if } (\hat{x}_\beta)_i \leq 0
            \end{aligned}\right. .
    \end{equation}
    Therefore, $\|x^*_\beta\| \leq \|\hat{x}_\beta\|$. Now, adding $-(Q_\beta -\lambda_sI_\beta)\hat{u}_\beta$ on both sides of (\ref{eq:xbeta_2}),
    \[(Q_\beta-\lambda_sI_\beta)(\hat{x}_\beta-\hat{u}_\beta) = (-Q_\beta+\frac{\lambda_s}{2}I_\beta)\hat{u}_\beta-Q_\gamma^T x_\alpha^* - \frac{1}{2}a_\beta;\]
    \[\implies \|(Q_\beta-\lambda_sI_\beta)(\hat{x}_\beta-\hat{u}_\beta)\| = \|(Q_\beta-\frac{\lambda_s}{2}I_\beta)\hat{u}_\beta-Q_\gamma^T x_\alpha^* - \frac{1}{2}a_\beta\|.\]
    Since $Q_\beta-\lambda_sI_\beta $ is symmetric positive definite, we can apply a spectral-norm bound on the matrix:
    \[\|(Q_\beta-\lambda_sI_\beta)(\hat{x}_\beta-\hat{u}_\beta)\|\geq (\underline{\lambda}_\beta-\lambda_s)\|\hat{x}_\beta-\hat{u}_\beta\|\]
    \begin{equation}\label{eq:thm1_condition}
        \begin{aligned}
            \implies \|\hat{x}_\beta-\hat{u}_\beta\| &\leq \frac{\|(Q_\beta-\frac{\lambda_s}{2}I_\beta)\hat{u}_\beta-Q_\gamma^T x_\alpha^* - \frac{1}{2}a_\beta\|}{\underline{\lambda}_\beta-\lambda_s}\\
            &\leq \frac{\|(Q_\beta-\frac{\lambda_s}{2}I_\beta)\hat{u}_\beta\| + \|Q_\gamma^T x_\alpha^* + \frac{1}{2}a_\beta\|}{\underline{\lambda}_\beta-\lambda_s}
        \end{aligned}
    \end{equation} 
    Now, as $\lambda_s < \underline{\lambda}_\beta$ then $Q_\beta-\frac{\lambda_s}{2}I_\beta \succ 0$; thus we have the spectral norm bound $\|(Q_\beta-\frac{\lambda_s}{2}I_\beta)\hat{u}_\beta\| \leq (\overline{\lambda}_\beta - \frac{\lambda_s}{2})\|\hat{u}_\beta\|$. Consequently,
    \begin{equation}\label{eq:upper_bound}
    \begin{aligned}
        \|\hat{x}_\beta-\hat{u}_\beta\| & \leq \frac{(\overline{\lambda}_\beta - \frac{\lambda_s}{2})\|\hat{u}_\beta\| + \|Q_\gamma^T x_\alpha^* + \frac{1}{2}a_\beta\|}{\underline{\lambda}_\beta-\lambda_s},\\
        &=  \frac{(2\overline{\lambda}_\beta - \lambda_s)\|\hat{u}_\beta\| + \overline{\Theta}}{2(\underline{\lambda}_\beta-\lambda_s)}.
    \end{aligned}
    \end{equation}

    Since $\hat{u}_\beta \geq 0$ (as defined at the beginning of Sec.~\ref{sec:eigen_method}) and given (\ref{eq:hat_opt}), we have $|(x^*_\beta)_i - (\hat{u}_\beta)_i| \leq |(\hat{x}_\beta)_i - (\hat{u}_\beta)_i|$; thus $\|x^*_\beta - \hat{u}_\beta\| \leq \|\hat{x}^*_\beta-\hat{u}_\beta\| \leq \frac{(2\overline{\lambda}_\beta - \lambda_s)\|\hat{u}_\beta\| + \overline{\Theta}}{2(\underline{\lambda}_\beta-\lambda_s)}$.

    Furthermore, due to $\lambda_s <\underline{\lambda}_\beta$, the argument of Case 3 in Proposition~\ref{prop:org_lambda} with $\lambda^0_{\min}$ replaced by $\lambda_s$; thus $\frac{\lambda_s}{2(\lambda_s - \overline{\lambda}_\beta)}|\|\hat{u}_{\beta^\prime}\| - \Omega| \leq \|x_\beta^*\| \leq \frac{\lambda_s}{2(\lambda_s - \underline{\lambda}_\beta)}(\|\hat{u}_{\beta^\prime}\| + \Omega)$.
    \qed
\end{proof}

The design of $\lambda_s := \min\{2\underline{\lambda}_\beta,\lambda_{\min}^0\}-1$ is explained as follows. First, to make $Q_0-\lambda_s I \succeq 0$, we need $\lambda_s \leq \lambda_{\min}\leq\min\{\underline{\lambda}_{\beta}, \underline{\lambda}_{\alpha}\}$. Second, we want $\frac{\lambda_s}{2(\lambda_s-\underline{\lambda}_{\beta})}\leq 1$ to make $\|x^{\beta}\|\leq \|\hat{u}_\beta +\Omega\|$ for convergence purposes. Therefore, considering $\lambda_s-\underline{\lambda}_{\beta}< 0$, 
\[\frac{\lambda_s}{2(\lambda_s-\underline{\lambda}_{\beta})} \leq 1\implies \lambda_s \geq 2(\lambda_s-\underline{\lambda}_{\beta})\implies \lambda_s\leq 2\underline{\lambda}_{\beta}.\]
Considering $\lambda_s \leq \lambda_{\min}\leq\min\{\underline{\lambda}_{\beta}, \underline{\lambda}_{\alpha}\}$, we have $\lambda_s \leq \min\{2\underline{\lambda}_\beta,\underline{\lambda}_\alpha\}$. As eigenvalues may be roughly approximated in implementation (see Section~\ref{sec:software}), we add a safety factor $\lambda_s := \min\{2\underline{\lambda}_\beta,\lambda_{\min}^0\}-1$ to guarantee $Q - \lambda_sI \succeq 0$.

Even in the case that $\lambda_{\min}^0<\underline{\lambda}_\beta$, we find that replacing $\lambda_{\min}^0$ by $\lambda_s$ still makes $\|x(\hat{u}) - \hat{u}\|$ smaller (see Lemma~\ref{cor:la_s}), which improves the robustness of the fixed-point iterations, as shown in the following Corollary.

\begin{corollary}\label{cor:la_s}
    Consider a nonconvex $\mathrm{MIQP}$ and associated perturbed solution $x^*$ as defined in Proposition~\ref{prop:org_lambda}.
    
    If $\lambda_{\min}^0 <\underline{\lambda}_\beta$, then
    $\|x_\beta^*-\hat{u}_\beta\| \leq \frac{(2\overline{\lambda}_\beta -\lambda_{\min}^0)\|\hat{u}_\beta\| + \overline{\Theta}}{2(\underline{\lambda}_\beta -\lambda_{\min}^0)}$, where $\overline{\Theta}:=\max\{\|2Q_\gamma^T x_\alpha + a_\beta\|\ |\ x_\alpha\in\{0,1\}^{n_\alpha}\}$ as defined in Theorem~\ref{thm:good_lambda}.
\end{corollary}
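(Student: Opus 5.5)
The plan is to rerun the first half of the proof of Theorem~\ref{thm:good_lambda}, with $\lambda_s$ replaced by $\lambda_{\min}^0$. That argument used only two facts: that $Q_\beta-\lambda I_\beta\succ 0$, and that $Q_\beta-\frac{\lambda}{2}I_\beta\succ 0$. Under the hypothesis $\lambda_{\min}^0<\underline{\lambda}_\beta$, the first holds for $\lambda=\lambda_{\min}^0$. For the second, note that $\lambda_{\min}^0\le 0$ in the nonconvex case, so $\frac{\lambda_{\min}^0}{2}\le 0$. Hence we only need $\underline{\lambda}_\beta-\frac{\lambda_{\min}^0}{2}>0$, which follows from $\frac{\lambda_{\min}^0}{2}\ge\lambda_{\min}^0$ together with $\lambda_{\min}^0<\underline{\lambda}_\beta$.

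\textbf{Step 1.} Fix the optimal binary part $x_\alpha^*$ and consider the strictly convex function $f(x_\beta)$ from (\ref{eq:xbeta_val}) on $x_\beta\ge 0$. Define $\hat{x}_\beta$ as the unconstrained stationary point, i.e.\ the solution of
\[(Q_\beta-\lambda_{\min}^0 I_\beta)\hat{x}_\beta=-\tfrac{1}{2}\lambda_{\min}^0\hat{u}_\beta-Q_\gamma^Tx_\alpha^*-\tfrac{1}{2}a_\beta.\]
As in (\ref{eq:hat_opt}), relate $x^*_\beta$ to $\hat{x}_\beta$ and deduce the componentwise bound $|(x^*_\beta)_i-(\hat{u}_\beta)_i|\le|(\hat{x}_\beta)_i-(\hat{u}_\beta)_i|$, using $\hat{u}_\beta\ge 0$. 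This gives $\|x^*_\beta-\hat{u}_\beta\|\le\|\hat{x}_\beta-\hat{u}_\beta\|$.

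\textbf{Step 2.} Subtract $(Q_\beta-\lambda_{\min}^0I_\beta)\hat{u}_\beta$ from both sides to obtain
\[(Q_\beta-\lambda_{\min}^0I_\beta)(\hat{x}_\beta-\hat{u}_\beta)=-(Q_\beta-\tfrac{\lambda_{\min}^0}{2}I_\beta)\hat{u}_\beta-Q_\gamma^Tx_\alpha^*-\tfrac12 a_\beta.\]
Bound the left side below by $(\underline{\lambda}_\beta-\lambda_{\min}^0)\|\hat{x}_\beta-\hat{u}_\beta\|$. Bound the right side above, via the triangle inequality and the spectral bound $\|(Q_\beta-\frac{\lambda_{\min}^0}{2}I_\beta)\hat{u}_\beta\|\le(\overline{\lambda}_\beta-\frac{\lambda_{\min}^0}{2})\|\hat{u}_\beta\|$, plus $\frac12\|2Q_\gamma^Tx_\alpha^*+a_\beta\|\le\frac12\overline{\Theta}$. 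Here $\overline{\Theta}$ is a maximum over all binary $x_\alpha$, so it covers whichever $x_\alpha^*$ is optimal. Dividing and multiplying numerator and denominator by $2$ yields the stated bound.

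\textbf{Main obstacle.} The delicate step is the projection claim in Step 1. Formula (\ref{eq:hat_opt}), copied from the theorem, is literally the clipping $\max(\hat{x}_\beta,0)$, but that equals the constrained minimizer only for diagonal $Q_\beta$. I would first try to justify the comparison $\|x^*_\beta-\hat{u}_\beta\|\le\|\hat{x}_\beta-\hat{u}_\beta\|$ directly in the same way the theorem does, since the corollary is meant to inherit that argument verbatim. If a fully rigorous route is wanted, a fallback is to restrict to the support $\beta'$. There the KKT system gives an equation like the one above with principal submatrices. Interlacing (as in Proposition~\ref{prop:org_lambda}) preserves the eigenvalue bounds, so one can bound $\|x^*_{\beta'}-\hat{u}_{\beta'}\|$. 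The components outside $\beta'$ then contribute $\|\hat{u}_{\beta\setminus\beta'}\|$, which must be absorbed into the $\|\hat{u}_\beta\|$ term. This absorption works because the coefficient $(2\overline{\lambda}_\beta-\lambda_{\min}^0)/(2(\underline{\lambda}_\beta-\lambda_{\min}^0))$ is at least $1$ when $\underline{\lambda}_\beta\le\overline{\lambda}_\beta$ and $\lambda_{\min}^0\le 0$; the exact bookkeeping of the combined norm is the part I would check last.
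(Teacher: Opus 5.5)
\textbf{Gap: the claim $Q_\beta-\frac{\lambda_{\min}^0}{2}I_\beta\succ 0$.} Your Step~2 rests on this claim, through the spectral estimate $\|(Q_\beta-\frac{\lambda_{\min}^0}{2}I_\beta)\hat u_\beta\|\le(\overline{\lambda}_\beta-\frac{\lambda_{\min}^0}{2})\|\hat u_\beta\|$. Your inference for it runs the wrong way. The fact $\frac{\lambda_{\min}^0}{2}\ge\lambda_{\min}^0$ makes $\frac{\lambda_{\min}^0}{2}$ the \emph{larger} number, so $\underline{\lambda}_\beta>\lambda_{\min}^0$ says nothing about the sign of $\underline{\lambda}_\beta-\frac{\lambda_{\min}^0}{2}$.

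What you actually need is $\lambda_{\min}^0<2\underline{\lambda}_\beta$; for the norm estimate alone, $\lambda_{\min}^0\le\underline{\lambda}_\beta+\overline{\lambda}_\beta$ suffices. This holds automatically when $\underline{\lambda}_\beta\ge 0$, but not otherwise. In Theorem~\ref{thm:good_lambda} the property does hold, because $\lambda_s<2\underline{\lambda}_\beta$ by construction. Substituting $\lambda_{\min}^0$ for $\lambda_s$, as both you and the paper's one-line proof do, silently drops it.

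\textbf{The statement fails in that regime, so no proof can close the gap.} Take $n_\alpha=n_\beta=1$, $Q^0=\mathrm{diag}(-2,-\tfrac32)$, $a=0$, $\hat u_\beta=1$. Then $\lambda_{\min}^0=-2<-\tfrac32=\underline{\lambda}_\beta=\overline{\lambda}_\beta$ and $\overline{\Theta}=0$. The perturbed objective is $\tfrac12x_\beta^2-2x_\beta-2x_\alpha$, with minimizer $x^*=(1,2)$, so $\|x_\beta^*-\hat u_\beta\|=1$. The claimed bound, however, equals $\frac{(-3+2)\cdot 1+0}{2\cdot\frac12}=-1$. A correct statement needs an extra hypothesis such as $\lambda_{\min}^0\le 2\underline{\lambda}_\beta$. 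Alternatively, replace $\overline{\lambda}_\beta-\frac{\lambda_{\min}^0}{2}$ by the true spectral norm $\max\{|\underline{\lambda}_\beta-\frac{\lambda_{\min}^0}{2}|,|\overline{\lambda}_\beta-\frac{\lambda_{\min}^0}{2}|\}$.

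\textbf{Your Step~1 concern and fallback.} You are right to distrust (\ref{eq:hat_opt}): clipping $\hat x_\beta$ is not the constrained minimizer when $Q_\beta$ is not diagonal. Your fallback fails as well, though. The coefficient $\frac{2\overline{\lambda}_\beta-\lambda_{\min}^0}{2(\underline{\lambda}_\beta-\lambda_{\min}^0)}$ is at least $1$ only if $2(\overline{\lambda}_\beta-\underline{\lambda}_\beta)\ge-\lambda_{\min}^0$. That fails whenever $\overline{\lambda}_\beta=\underline{\lambda}_\beta$, so the absorption step is unavailable.

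\textbf{A clean repair of Step~1.} Set $M:=Q_\beta-\lambda_{\min}^0I_\beta\succ 0$ and $\|v\|_M:=(v^TMv)^{1/2}$. Then $f(x_\beta)=\|x_\beta-\hat x_\beta\|_M^2+\mathrm{const}$, so $x_\beta^*$ is the $M$-norm projection of $\hat x_\beta$ onto the nonnegative orthant, which contains $\hat u_\beta$. With $\mu:=\underline{\lambda}_\beta-\lambda_{\min}^0$, nonexpansiveness of this projection gives
\[
\|x_\beta^*-\hat u_\beta\|^2\le\tfrac{1}{\mu}\|x_\beta^*-\hat u_\beta\|_M^2\le\tfrac{1}{\mu}\|\hat x_\beta-\hat u_\beta\|_M^2\le\tfrac{1}{\mu^2}\|M(\hat x_\beta-\hat u_\beta)\|^2 .
\]
This is exactly the estimate the clipping argument was meant to supply. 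Combined with your Step~2, it proves the corollary under the added hypothesis $\lambda_{\min}^0\le 2\underline{\lambda}_\beta$.
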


\begin{proof}
    Replace $-\lambda_s$ with $-\lambda_{\min}^0$ into Equation~(\ref{eq:upper_bound}), then the conclusion is proved. \qed
\end{proof}

Observe that $\lambda_s = -\min\{2\underline{\lambda}_\beta, \lambda_{\min}^0\} - 1 < \lambda_{\min}^0 \leq \underline{\lambda}_\beta \leq \overline{\lambda}_\beta$, $\frac{2\overline{\lambda}_\beta - \lambda_s}{2(\underline{\lambda}_\beta - \lambda_s)} < \frac{2\overline{\lambda}_\beta - \lambda_{\min}^0}{2(\underline{\lambda}_\beta - \lambda_{\min}^0)}$; moreover, $\frac{\overline{\Theta}}{2(\underline{\lambda}_\beta - \lambda_s)} < \frac{\overline{\Theta}}{2(\underline{\lambda}_\beta - \lambda_{\min}^0)}$. Thus $\|x(\hat{u}) - \hat{u}\|$ is smaller if replace $\lambda_{\min}^0$ with $\lambda_s$, which makes the fixed-point iterations more robust.

The relation between $x(\hat{u})$ and $\mbox{Approx}(\hat{u})$ for MIQCP will be too complicated to analyze theoretically. Therefore, we defer the comparison between the original eigenvalue method and our proposed method to the experiments in Section~\ref{sec:comp_matrices}.

\section{Primal Heuristics}\label{sec:primal_heuristics}

In this section, we propose some primal heuristics to detect the initial feasible solutions across different types of MIQCQPs mentioned in Section~\ref {sec:intro}.

\subsection{Mixed-Integer Box-Constrained Quadratic Programs}

For MIBQPs, we adopt Algorithm~\ref{Alg:class1}, where integer fixings are explored by toggling between the ceiling and floor.

\begin{algorithm}[!htbp]
\SetAlgoLined
\LinesNumbered
\SetKwRepeat{Do}{do}{while}
\SetKwInput{Input}{Input}
\SetKwInput{Output}{Output}
\Input{MIBQP $\mathrm{Q}_{\mathrm{MIBQP}}$}
\Output{Feasible Solution $x^*$}
 Solve the integer-relaxed $\mbox{Approx}(u)$ for $\mathrm{Q}_{\mathrm{MIBQP}}$ to get $x^{*}$\;
 \For{$i \in \mathrm{shuffle}(\mathcal{I})$}{
        \If{$x_i^{*}$ is not an integer}{
            Set $x_i^* := \arg\min\{x^TQ^0x+a_0^Tx+b_0\ |\ x_i \in\{\lfloor x_i^{*} \rfloor, \lceil x_i^{*} \rceil\},  x_j = x_j^{*}, \forall j\neq i\}$\;
        }
 }
 \textbf{return} $x^*$
 \caption{Random Flip}
 \label{Alg:class1}
\end{algorithm}

\subsection{Mixed-Integer Quadratic programming}

For MIQP with a quadratic objective function but without any quadratic constraints, we consider two cases: (1) $\mathcal{L}_c=\emptyset$, i.e., there is an equivalent convex quadratic function for the objective, and (2) $\mathcal{L}_c \neq \emptyset$.

\subsubsection{Case 1}
If $\mathcal{L}_c=\emptyset$, $\mbox{Approx}(u)$ is a convex reformulation to the MIQP. We detect the primal feasible solution by Random Flip and then project with Algorithm~\ref{Alg:class2_1}. 

In lines 4 and 6, we temporarily fix $x_i^0$ to its floor or ceiling and run domain propagation to check whether such fixing is feasible or infeasible \cite{fischetti2009feasibility}. If exactly one of the fixings is feasible, we keep such fixing; otherwise, we select whichever one minimizes the objective function in line 9. Finally, in line 18, we project the flipped solution to the feasible region of MIQP. The objective function with $\ell_1$ norm can be linearized (see \cite{fischetti2005feasibility}); thus, this projection is completed by solving an MILP---experiments show most of these can be solved to optimality in seconds.

\begin{algorithm}[!htbp]
\SetAlgoLined
\LinesNumbered
\SetKwRepeat{Do}{do}{while}
\SetKwInput{Input}{Input}
\SetKwInput{Output}{Output}
\Input{MIQP $\mathrm{Q}_{\mathrm{MIQP}}$, feasible region $\mathcal{F}(\mathrm{Q}_{\mathrm{MIQP}})$}
\Output{Feasible Solution $x^*$}
  Solve the integer-relaxed $\mbox{Approx}(u)$ for $\mathrm{Q}_{\mathrm{MIQP}}$ to get $x^{(0)}$\;
 \For{$i \in \mathrm{shuffle}(\mathcal{I})$}{
        \If{$x_i^0$ is not an integer}{
            \uIf{$x_i^0 := \lceil x_i^0 \rceil$ is feasible $\wedge$ $x_i^0 := \lfloor x_i^0 \rfloor$ is infeasible}{
                Set $x_i^0 := \lceil x_i^0 \rceil$\;
            }
            \uElseIf{$x_i^0 := \lceil x_i^0 \rceil$ is infeasible $\wedge$ $x_i^0 := \lfloor x_i^0 \rfloor$ is feasible}{
                Set $x_i^0 := \lfloor x_i^0 \rfloor$\;
            }
            \Else{
                $x_i^0 := \arg\min_{x_i \in\{\lfloor x_i^0 \rfloor, \lceil x_i^0 \rceil\}}\{x^TQ^0x+a_0^Tx+b_0\ |\ x_j = x_j^0, \forall j\neq i\}$\;
            }
        }
 }
 $x^* := \arg\min_x\{\|x - x^{(0)}\|_1\ |\ x\in \mathcal{F}(\mathrm{Q}_{\mathrm{MIQP}})\}$\;
 \textbf{return} $x^*$
 \caption{Random Flip and Project}
 \label{Alg:class2_1}
\end{algorithm}

\subsubsection{Case 2}

If $\mathcal{L}_c\neq \emptyset$, Algorithm~\ref{Alg:class2_1} also returns a feasible solution. However, we leverage the proposed fixed-point iteration method in Section~\ref{sec:eigen_method_miqp} to develop Algorithm~\ref{Alg:class2_2}. 

Line 4 terminates the algorithm if the objective value is not improved. Line 5 updates $\hat{u}$ based on the fixed-point relation from Proposition~\ref{prop:fixed-point}, and we adopt an accelerated fixed-point iteration proposed by Walker and Ni\cite{walker2011anderson}.

\begin{algorithm}[!htbp]
\SetAlgoLined
\LinesNumbered
\SetKwRepeat{Do}{do}{while}
\SetKwInput{Input}{Input}
\SetKwInput{Output}{Output}
\Input{MIQP $\mathrm{Q}_{\mathrm{MIQP}}$, feasible region $\mathcal{F}(\mathrm{Q}_{\mathrm{MIQP}})$}
\Output{Feasible Solution $x^*$}
 Set $\hat{u}:=u$, $obj = \infty$\;
 \For{$i = 1:\mathrm{MaxIter}$}{
    Set $x^*$ as the solution to \texttt{Random Flip and Project} with $\mbox{Approx}(\hat{u})$\;
    \eIf{$obj - ({x^*}^TQ^0x^*+a_0^Tx^*+b_0) > \epsilon$}{
        Set $\hat{u}_j = \alpha\hat{u}_j + 2(1-\alpha)x_j^*$ for all $j\notin\mathcal{I}$\;
        Set $obj = ({x^*}^TQ^0x^*+a_0^Tx^*+b_0)$\;
    }{
        \textbf{Break}\;
    }
}
 \textbf{return} $x^*$
 \caption{Projection and Fixed-Point Iterations}
 \label{Alg:class2_2}
\end{algorithm}

\subsection{Mixed-Integer Quadratically-Constrained Quadratic Programming}
\label{sec:MIQCQPproj}
We propose two different feasibility-pump-like methods for (nonconvex) MIQCQPs and compare them at the end of this section.

\subsubsection{Two-Projection Method} 

We construct Algorithm~\ref{Alg:class3_1} based on two kinds of projections: the projection onto the feasible region of $\mbox{Approx}(\hat{u})$, $\mathcal{F}(\mbox{Approx}(\hat{u}))$, which solves a convex MIQCP and corresponds to the ``projection step'' in \cite{bonami2012heuristics}; and the projection onto the feasible region of the original MIQCQP, $\mathcal{F}(\mathrm{Q}_{MIQCP})$, which solves a (nonconvex) MIQCP and corresponds to the ``rounding step'' in \cite{bonami2009feasibility}. Though both projections are NP-hard, we only aim to get a feasible solution within the time limit (10 seconds) instead of an optimal solution.

Line 1 conducts a local NLP solver for the continuous relaxation of $\mathrm{Q}_{MIQCP}$. Line 2 initializes $\hat{u}$. Line 4 solves a convex MIQCP with a time limit. If line 4 does not find any feasible solution, we increase $\hat{u}$ in line 6 to enlarge the feasible region of $\mbox{Approx}(\hat{u})$. If we get $x^{(1)}$, line 8 projects $x^{(1)}$ onto $\mathcal{F}(\mathrm{Q}_{MIQCP})$, which solves a nonconvex MIQCP with a time limit. If line 8 does not find any feasible solution, we tune $\hat{u}$ based on $x^{(1)}$ and the fixed-point relation. If we get $x^{(2)}$, line 16 fixes all integers in $x^{(2)}$ and locally solves $\mathrm{Q}_{MIQCP}$ to get an improved feasible solution, e.g., $x^*$.

\begin{algorithm}[!htbp]
\SetAlgoLined
\LinesNumbered
\SetKwRepeat{Do}{do}{while}
\SetKwInput{Input}{Input}
\SetKwInput{Output}{Output}
\Input{MIQCQP $\mathrm{Q}_{MIQCP}$, feasible regions $\mathcal{F}(\mathcal{Q})$ and $\mathcal{F}(\mbox{Approx}(\hat{u}))$}
\Output{Feasible Solution $x^*$}
 Locally solve continuous relaxation of $\mathrm{Q}_{MIQCP}$ to get $x^{(0)}$\;
 Set $\hat{u}_i:= 2x^{(0)}_i, \forall i\notin \mathcal{I}$, and $\hat{u}_i:= u_i, \forall i\in \mathcal{I}$\;
 \For{$i=1:\mathrm{MaxIter}$}{
    Set $x^{(1)}$ as the best-detected feasible solution in solving $\arg\min\{\|x - x^{(0)}\|_1:x\in \mathcal{F}(\mbox{Approx}(\hat{u}))\}$\;
    \eIf{Do not get $x^{(1)}$}{
        Set $\hat{u}_i:= 2\hat{u}_i$ for all $i\notin \mathcal{I}$\;
    }{
        Set $x^{(2)}$ as the best-detected feasible solution in solving $\arg\min\{\|x - x^{(1)}\|_1:x\in \mathcal{F}(\mathrm{Q}_{MIQCP})\}$\;
        \eIf{Do not get $x^{(2)}$}{
            Set $\hat{u}_i:=\alpha\hat{u}_i + (1-\alpha)x^{(1)}_i$ for all $i\notin \mathcal{I}$\;
        }{
            \textbf{Break}\;
        }
    }
 }
 Fix all integer variables in $x^{(2)}$ and solve $\mathrm{Q}_{MIQCP}$ with a local solver, obtaining $x^*$\;
 \textbf{return} $x^*$
 \caption{Two-Projection Method}
 \label{Alg:class3_1}
\end{algorithm}

We find that Gurobi \cite{gurobi} and COPT \cite{copt} solvers can detect a feasible solution in line 4 within the time limit of 10 seconds in most instances, while line 8 is challenging. However, after we apply fixed-point iterations \cite{ortega2000iterative,walker2011anderson} for $\hat{u}$, $x^{(1)}$ usually closes to or insides $\mathcal{F}(\mathrm{Q}_{MIQCP})$ after several iterations as we discussed in Section~\ref{sec:eigen_method}. Then we can find a feasible solution in line 8 within the time limit.

\subsubsection{Relaxing Projection Method}
\label{sec:relproj}
Here we develop an alternating heuristic given as Algorithm~\ref{Alg:class3_2}.

In Section~\ref{sec:eigen_method}, we show that if $x(\hat{u})\geq \hat{u}$, we find a feasible solution. Therefore, we propose (FPR1), whose aim is to minimize  $\hat{u}_i -x_i$. Moreover, $x(\hat{u})$ violates constraint~(\ref{eq:eigen_I_qcon}) by a factor of $\lambda^k(\sum_{i\not\in\mathcal{I}}\hat{u}_i\cdot x(\hat{u})_i - x(\hat{u})_i^2)$; thus minimizing $\hat{u}_i -x_i$ can also reduce constraint violation. Additionally, (FPR1) is a convex MIQCQP, and experiments demonstrate it can be solved within the time limit (10 seconds) in most cases.

\begin{equation*}
    \begin{aligned}
        (\mbox{FPR1})\ \min\ &\sum_{i\notin\mathcal{I}} \delta_i\\
        s.t.\quad &x^T(Q^k-\lambda^k I)x + a_k^Tx + \lambda^k\sum_{i\not\in \mathcal{I}}\hat{u}_ix_i + \lambda^k\sum_{i\in \mathcal{I}}X_{ii} \leq b_k, k\in \{1,...,m_1\}\\
        &\delta_i \geq \hat{u}_i -x_i, i\notin \mathcal{I}\\
        &\delta \geq 0\\
        &(\ref{eq:linearcon}),(\ref{eq:boxcon}),(\ref{eq:intcon})
    \end{aligned}
\end{equation*}
If we can find a feasible solution for (FPR1) such that $\sum_{i\notin\mathcal{I}}\delta_i = 0$, then $x_i\geq \hat{u}_i$ for all $i\notin \mathcal{I}$; thus $x$ is a feasible solution for MIQCQP~(\ref{eq:miqcqp}).  Otherwise, we apply (FPR2) to do a further local search. 

(FPR2) contains $m_1$ slack variables for quadratic constraints~(\ref{eq:quadcon}), and the objective function is the $\ell_1$-slack-penalized function. If we can find a solution such that  $\sum_{k\in\{1,...,m_1\}} s_k = 0$, then the solution $x$ is feasible in MIQCQP~(\ref{eq:miqcqp}). Though (FPR2) is a nonconvex MIQCQP, we will fix all integer variables from the solution of (FPR1) and locally solve the NLP.

\begin{equation*}
    \begin{aligned}
        (\mbox{FPR2})\ \min\ &\sum_{k=\{1,...,m_1\}} s_k\\
        s.t.\quad &x^TQ^kx + a_k^Tx  - s_k \leq b_k, k\in \{1,...,m_1\}\\
        &s \geq 0\\
        &(\ref{eq:linearcon}),(\ref{eq:boxcon}),(\ref{eq:intcon})
    \end{aligned}
\end{equation*}

Algorithm~\ref{Alg:class3_2} is based on the two subproblems as follows. Lines 4 and 5 solve (FPR1) and obtain $x^{(1)}$. If (FPR1) is infeasible, we enlarge the feasible region by doubling $\hat{u}$ in line 17. Line 6 checks whether $x^{(1)}$ is feasible for $\mathrm{Q}_{MIQCP}$. Line 9 fixes integer variables in $x^{(1)}$ and locally solves (FPR2), which is a nonconvex QCP, which tries to move $x^{(1)}$ towards the feasible region of MIQCQP~(\ref{eq:miqcqp}) and get $x^{(2)}$. If $x^{(2)}$ is still infeasible, line 13 updates $\hat{u}$ with $x^{(2)}$ and the fixed-point relation.

\begin{algorithm}[!htbp]
\SetAlgoLined
\LinesNumbered
\SetKwRepeat{Do}{do}{while}
\SetKwInput{Input}{Input}
\SetKwInput{Output}{Output}
\Input{MIQCQP $\mathrm{Q}_{MIQCP}$, (FPR1), (FPR2)}
\Output{Feasible Solution $x^*$}
 Locally solve continuous relaxation of $\mathrm{Q}_{MIQCP}$ to get $x^{(0)}$\;
 Set $\hat{u}_i:= 2x^{(0)}_i, \forall i\notin \mathcal{I}$, and $\hat{u}_i:= u_i, \forall i\in \mathcal{I}$\;
 \For{$i=1:\mathrm{MaxIter}$}{
    \eIf{$\mathrm{(FPR1)}$ with $\hat{u}$ is feasible}{
        Solve (FPR1) with $\hat{u}$ obtaining $x^{(1)}$ and $\delta^*$\;
        \eIf{$\sum_{j\notin\mathcal{I}}\delta_j^* = 0$}{
            Set $x^{(3)} := x^{(1)}$, Break\;
        }{
            Fix all integers in $x^{(1)}$ and locally solve (FPR2) obtaining $x^{(2)}$ and $s^*$\; 
            \eIf{$\sum_{k\in\{1,...,m_1\}}s_k^* = 0$}{
                Set $x^{(3)} := x^{(2)}$, Break\;
            }{
                Set $\hat{u}_i:= \alpha\hat{u}_i+(1-\alpha) x^{(2)}_i, \forall i\notin\mathcal{I}$\;
            }
        }
    }{
        $\hat{u}_i:= 2\hat{u}_i, \forall i\notin\mathcal{I}$\;
    }
 }
 Fix all integer variables in $x^{(3)}$ and solve $\mathrm{Q}_{MIQCP}$ with a local solver, obtaining $x^*$\;
 \textbf{return} $x^*$
 \caption{Relaxing Projection Method}
 \label{Alg:class3_2}
\end{algorithm}

Similar to Algorithm~\ref{Alg:class3_1}, Gurobi \cite{gurobi} and COPT \cite{copt} can detect a feasible solution in line 5 within the time limit of 10 seconds in most instances. Furthermore, we can solve (FPR2) in line 9 with a warmstart point $x^{(1)}$, where $s_k = \max\{0, {x^{(1)}}^TQ^kx^{(1)} + a_k^Tx^{(1)} - b_k\}$ has been a feasible solution.

\subsubsection{Comparison of Two-Projection and Relaxing Projection}
Algorithm~\ref{Alg:class3_1} needs to solve both convex and nonconvex MIQCQPs (lines 4 and 8), while Algorithm~\ref{Alg:class3_2} only needs to solve a convex MIQCQP~(FPR1), and locally solve a nonconvex QCP~(FPR2) with fixed integer variables. Therefore, compared to Algorithm~\ref{Alg:class3_1}, Algorithm~\ref{Alg:class3_2} may get solutions faster. 

On the other hand, Algorithm~\ref{Alg:class3_1} projects $x^{(0)}$ to $x^{(1)}$, and projects $x^{(1)}$ to $x^{(2)}$ in every iteration; thus, $x^{(0)}$, the local optima of the continuous relaxation of MIQCQP, is taken into account throughout the entire process. In contrast, Algorithm~\ref{Alg:class3_2} only uses $x^{(0)}$ to find the initial $\hat{u}$, and subsequently does not incorporate any information from the objective function~(\ref{eq:obj}). Therefore, the feasible solution identified by Algorithm~\ref{Alg:class3_1} may be of higher quality than that obtained from Algorithm~\ref{Alg:class3_2}.

As Algorithm~\ref{Alg:class3_1} and \ref{Alg:class3_2} have their own unique advantages, we run them in parallel: if either one finds a feasible solution, we terminate the other one after 1 more iteration.

\subsection{Comparison with Feasibility Pump Literature}\label{sec:comp_methods}

In this subsection, we compare our proposed methods with similar primal heuristics based on the feasibility pump. Algorithm~\ref{Alg:class1} starts from a solution of the continuous relaxation of $\mbox{Approx}(u)$, as with typical feasibility pumps \cite{fischetti2005feasibility,fischetti2009feasibility}; likewise with flipping of binary variables.

Algorithm~\ref{Alg:class2_1} is an extension to Algorithm~\ref{Alg:class1}, in which we want the detection solution close to the start point, which potentially has a better objective value, as well as is integer-feasible. Therefore, when flipping binaries, we conduct a domain propagation to check the feasibility of different assignments of binaries, in the spirit of feasibility pump 2.0 \cite{fischetti2009feasibility}.

Algorithm~\ref{Alg:class2_2} extends Algorithm~\ref{Alg:class2_1} for nonconvex MIQCQP with $\mathcal{L}_c\neq\emptyset$, in which $\mbox{Approx}(u)$ is not an exact convex reformulation. As discussed in Sections~\ref {sec:org_eig_method} and \ref{sec:eigen_method}, we develop a dynamic convex approximation $\mbox{Approx}(\hat{u})$ to refine convex models iteratively, which may result in a better feasible solution with Algorithm~\ref{Alg:class2_1}. To our knowledge, such an approximation scheme for projection is unique in the literature.

Algorithm~\ref{Alg:class3_1} and Algorithm~\ref{Alg:class3_2} stray further from the pumping idea and could be viewed as part of a more general family of alternating heuristics. Algorithm~\ref{Alg:class3_1} may be described as a modified feasibility-pump-style two-projection method, whereas Algorithm~\ref{Alg:class3_2} can be considered a pump-inspired alternating feasibility-repair method. Moreover, both involve our convex approximation updates. In contrast, the MINLP literature on feasibility pumps follow more closely the original MIP template, alternating between continuous relaxation (NLP) feasibility and integrality feasibility via projection, rounding, or mixed-integer distance minimization \cite{d2012storm,belotti2017three}.

\section{Parallel Local Branching}\label{sec:PLB}
To further refine solutions, we adopt the local branching framework of Fischetti and Lodi \cite{fischetti2003local} with two key modifications:  
\begin{enumerate}
    \item we propose a reverse local branching constraint to help the heuristic escape local optimum;
    \item we parallelize the local branching method.
\end{enumerate}

\subsection{Standard and Reverse Local Branching Constraints}  
Fischetti and Lodi \cite{fischetti2003local} proposed the local branching constraint (LBC):  
\begin{equation}
    \Delta(x, \bar{x}) := \sum_{j \in \bar{S}} (1 - x_j) + \sum_{j \in \mathcal{B} \setminus \bar{S}} x_j \leq k,
\end{equation}  
where $\bar{S} := \{j \in \mathcal{B} \mid \bar{x}_j = 1\}$. LBC implies the $k$-nearest-neighborhood of the feasible solution $\bar{x}$. However, this search strategy can sometimes fall into a bad local optimum. Fischetti and Lodi \cite{fischetti2003local} restart local branching with a non-best-known solution.

We propose another way to escape the local optimum by adding a reverse local branching constraint (RLBC):  
\begin{equation}
    \Delta_r(x, \bar{x}) := \sum_{j \in \bar{S}} x_j + \sum_{j \in \mathcal{B} \setminus \bar{S}} (1 - x_j) \leq k.
\end{equation}  
RLBC defines the $k$-farthest-neighborhood of the feasible solution $\bar{x}$, ensuring search in an unexplored region.

\subsection{Parallel Algorithm}  

Suppose there is an MIQCQP, e.g., $\mathrm{Q}_{MIQCP}$ and an initial solution $\bar{x}^0$, which is not necessarily feasible. The parallel local branching is implemented as Figure~\ref{fig:parallel_local_branch}. 

\begin{figure}[!htbp]
    \centering
    \begin{tikzpicture}[scale=.75,auto=left]
    \node[] at (-6.5, 7) {$\Delta(x, \bar{x}^0)\leq k$};
    \node[] at (-0.95, 7) {$\Delta(x, \bar{x}^0)\geq k$};
    \node[] at (-5.5, 3.5) {$\Delta(x, \bar{x}^1)\leq k$};
    \node[] at (1.4, 3.5) {$\Delta(x, \bar{x}^1)\geq k$};
    \node[] at (4.75, 1) {$\Delta_r(x, \bar{x}^1)\leq k$};
    \node[] at (-2, 0.5) {$\Delta(x, \bar{x}^2)\leq k$};
    \node[] at (-6, 4.3) {{\color{red}Improved Solution $\bar{x}^1$}};
    \node[] at (-4, 1.3) {{\color{red}No Improved Solution}};
    \node[] at (4, -1.8) {{\color{red}Improved Solution $\bar{x}^2$}};
    
        \node[circle,draw] (r0) at (-3,8) {$R_0$};
        \node[rectangle,draw] (a1) at (-8.3,5) {1.1};
        \node[rectangle,draw] (a2) at (-6.8,5) {1.2};
        \node[rectangle,draw] (a3) at (-5.3,5) {1.3};
        \node[rectangle,draw] (a4) at (-3.7,5) {1.4};
        \node[circle,draw] (r1) at (-1,5) {$R_1$};
        \node[rectangle,draw] (b1) at (-6.3,2) {2.1};
        \node[rectangle,draw] (b2) at (-4.8,2) {2.2};
        \node[rectangle,draw] (b3) at (-3.3,2) {2.3};
        \node[rectangle,draw] (b4) at (-1.8,2) {2.4};
        \node[circle,draw] (r2) at (1,2) {$\bar{R}_1$};
        \node[rectangle,draw] (c1) at (6.3,-1) {3.1};
        \node[rectangle,draw] (c2) at (4.8,-1) {3.2};
        \node[rectangle,draw] (c3) at (3.3,-1) {3.3};
        \node[rectangle,draw] (c4) at (1.8,-1) {3.4};
        \node[circle,draw] (r3) at (-2,-1) {$\bar{R}_2$};
        \node[] at (-2,-2) {$\cdots$};
        \draw[-,black] (r0) -- (-6,6.25);
        \draw[-,black] (-6, 6.25) -- (a1);
        \draw[-,black] (-6, 6.25) -- (a2);
        \draw[-,black] (-6, 6.25) -- (a3);
        \draw[-,black] (-6, 6.25) -- (a4);
        \draw[-,black] (r0) -- (r1);
        \draw[-,black] (r1) -- (-4,3.25);
        \draw[-,black] (-4, 3.25) -- (b1);
        \draw[-,black] (-4, 3.25) -- (b2);
        \draw[-,black] (-4, 3.25) -- (b3);
        \draw[-,black] (-4, 3.25) -- (b4);
        \draw[-,black] (r1) -- (r2);
        \draw[-,black] (r2) -- (4,0.25);
        \draw[-,black] (4,0.25) -- (c1);
        \draw[-,black] (4,0.25) -- (c2);
        \draw[-,black] (4,0.25) -- (c3);
        \draw[-,black] (4,0.25) -- (c4);
        \draw[-,black] (r2) -- (r3);

        \draw[dashed, draw = red, rounded corners=10pt] (-9, 6) rectangle (-3, 4);
        \draw[dashed, draw = red, rounded corners=10pt] (-7, 3) rectangle (-1, 1);
        \draw[dashed, draw = red, rounded corners=10pt] (1, 0) rectangle (7, -2.2);

        \draw[draw = black] (1.1, 8.5) rectangle (7.1, 5.2);
        \draw[dashed, draw = red, rounded corners=4pt] (1.35, 8.25) rectangle (3, 7.5);
        \node[] at (5, 7.85) {Parallel Computing};
        \draw[draw = black] (1.35, 7.25) rectangle (3, 6.5);
        \node[] at (5, 6.85) {Subproblem (LBCs)};
        \node[circle, draw] at (2, 5.85) {$R_i$};
        \node[] at (5, 5.85) {Reduced Problem};
    \end{tikzpicture}
    \caption{Caption}
    \label{fig:parallel_local_branch}
\end{figure}

In the root node $R_0$, we have $\mathrm{Q}_{MIQCP}$ and $\bar{x}^0$.

In the left branch, we add an LBC, $\Delta(x, \bar{x}^0)\leq k$, to $\mathrm{Q}_{MIQCP}$, then get a subproblem. The classical local branching sets $k$ around $18$ and solves the subproblem \cite{fischetti2003local}. In our parallel local branching, we split the subproblem into 4 parts solved in parallel, each incorporating two LBCs:
\[
\Delta(x, \bar{x}) \geq k_1^{(i)}, \quad \Delta(x, \bar{x}) \leq k_2^{(i)}, i=1,2,3,4.
\]   
The values of $(k_1^{(i)}, k_2^{(i)})$ for subproblems $i = 1,2,3,4$ are set as $(1,7)$, $(8,13)$, $(14,17)$, and $(18,19)$, respectively. This partitioning is designed to balance the computational workload, because the regions farther from $\bar{x}$ tend to have a larger search space.

We solve the four subproblems in parallel with a time limit, and get the best solution $\bar{x}^1$. If $\bar{x}^1$ is better than $x^0$, we go to node $R_1$.

In node $R_1$, we have a reduced problem $\mathrm{Q}_{MIQCP}$ with an additional LBC,
$\Delta(x, \bar{x}^0)\geq k$ and an improved solution $\bar{x}^1$.

Then, in the left branch of node $R_1$, we repeat the step from the left branch of the root node $R_0$. If we cannot find an improved solution, we will check whether all subproblems have been solved or terminated due to the time limit. If one subproblem stops due to the time limit, we can split the subproblem with LBCs and solve them again. 

If all subproblems have been solved but with no improved solution, we will adopt RLBCs, e.g., $\Delta_r(x,\bar{x}^1)\leq k$ to search the farthest neighborhood of $\bar{x}^1$.

\section{Algorithm and Experiments}\label{sec:experiment}

This section describes our implementation and experiments.  All code is publicly available at \href{https://github.com/foreverdyz/primalheuristic\_miqcqp}{repository}\footnote{https://github.com/foreverdyz/primalheuristic\_miqcqp}. Our algorithm is implemented as shown in Chart~\ref{fig:algorithm}.

\begin{Chart}[!htbp]
\centering
    \begin{tikzpicture}[scale=.75, node distance = 1.5cm]
        \node (Input) [startstop] {Input MIQCQP};
        \node (classify) [process, below of=Input] {Categorize the problem, Section~\ref{sec:intro}};
        \node (NL) [process, below of=classify, xshift=-3cm] {MIBQP};
        \node (WL) [process, below of=classify] {MIQP};
        \node (QCP) [process, below of=classify, xshift= 3cm] {MIQCP};
        \node (NLH) [process, below of=NL] {Alg.~\ref{Alg:class1}};
        \node (WLH) [process, below of=WL] {Alg.~\ref{Alg:class2_1}/\ref{Alg:class2_2}};
        \node (QCPH) [process, below of=QCP] {Alg.~\ref{Alg:class3_1} and \ref{Alg:class3_2} in parallel};
        \node (output) [process, below of=WLH] {Parallel local branching, Section~\ref{sec:PLB}};
        \node (Output) [startstop, below of = output] {Output the best-found solution};
        \draw [arrow] (Input) -- (classify);
        \draw [arrow] (classify) -- (NL);
        \draw [arrow] (classify) -- (WL);
        \draw [arrow] (classify) -- (QCP);
        \draw [arrow] (NL) -- (NLH);
        \draw [arrow] (WL) -- (WLH);
        \draw [arrow] (QCP) -- (QCPH);
        \draw [arrow] (NLH) -- (output);
        \draw [arrow] (WLH) -- (output);
        \draw [arrow] (QCPH) -- (output);
        \draw [arrow] (output) -- (Output);
    \end{tikzpicture}
    \caption{Proposed Primal Heuristics}
    \label{fig:algorithm}
\end{Chart}

We first classify the input problem into one of the three categories mentioned in Section~\ref{sec:intro}, and detect the first feasible solution with the corresponding primal heuristics proposed in Section~\ref{sec:primal_heuristics}. After getting a feasible solution, parallel local branching from Section~\ref{sec:PLB} will further improve the solution.

\subsection{Experimental Setup}

\subsubsection{Test Set}
Experiments are conducted on the set of the QPLIB Collection \cite{furini2019qplib}, consisting of $453$ instances. We select 319 discrete instances and split them into three groups:

\textbf{MIBQP}, 23 instances with neither quadratic nor general linear constraints;

\textbf{MIQP}, 121 instances with linear constraints but not quadratic constraints;

\textbf{MIQCQP}, 175 instances with quadratic constraints.

\subsubsection{Software and Hardware}\label{sec:software}
All algorithms are implemented in Julia 1.11.6 \cite{bezanson2017julia} and a desktop running 64-bit Windows 11 with an AMD Ryzen Threadripper PRO 5975WX. We use 32 threads (1 thread per core) and a memory limit of 32 GB. 

We solve all MIQCQPs, MILPs, and LPs with Gurobi 12.0.1 \cite{gurobi}, some QCQPs with Ipopt 3.14.17 \cite{ipopt}, and with the modeling language JuMP 1.23.2 \cite{jump}. All solvers are deployed with the default setting.

We use \texttt{Arpack.jl} based on ARPACK library \cite{lehoucq1998arpack} to approximate the minimum and maximum eigenvalues of matrices.

\subsubsection{Parameter Setting}\label{sec:parameter}
For Algorithm~\ref{Alg:class2_2}, Algorithm~\ref{Alg:class3_1}, Algorithm~\ref{Alg:class3_2}, we set set the accelerated fixed-point iteration \cite{walker2011anderson} parameter $\alpha = 0.5$. Since we run Algorithm~\ref{Alg:class3_1} and Algorithm~\ref{Alg:class3_2} in parallel, we use 8 threads when solving all projections or other subproblems with Gurobi. In parallel local branching, we explore 4 subproblems in parallel in each branch and use Gurobi for each subproblem with 8 threads and 8 GB memory. We set a global runtime limit of $300$ seconds as the requirement of \href{https://github.com/foreverdyz/primalheuristic\_miqcqp}{Land-Doig MIP Computational Competition 2025}\footnote{https://www.mixedinteger.org/2025/competition}.

\subsubsection{Measurements}
We collect the following measurements to compare our methods with different setups and Gurobi.

\textbf{Found}. The number of instances for which at least one feasible solution is found.

\textbf{Gap\%}. Let $v^*$ be the best-known objective value, and $v$ be the best-detected objective from our method or Gurobi. The primal gap is calculated as $$\frac{|v-v^*|}{\max\{|v|,|v^*|\}}\times 100.$$

\textbf{$\epsilon$-Gap}. The number of instances for which the final gap is less than $1e-4$.

\textbf{PI}. Primal integral \cite{berthold2015heuristic} evaluates the primal gap over time. Let $t_0 = 0$ be the start of the search process, $t_1,...,t_{s-1}$ be times when a new solution is found with gaps $g_1, ..., g_{s-1}$, and $t_s = T$ be the end of the algorithm. Then the primal integral is defined as $$\sum_{i=2}^sg_{i-1}\cdot(t_i-t_{i-1}) + t_1-t_0.$$

\textbf{FFT}. The time to find the first feasible solution.

\textbf{Same, Better, Worse}. Consider two methods obtaining objective values $v_1, v_2$ respectively on the same instance. If $\frac{|v_1-v_2|}{\max\{|v_1|,|v_2|\}}\leq 1e-5$, we count two methods as ``same'' for this instance. Otherwise, when the sense of the problem is min, if $v_1 < v_2$, the first method is better; if $v_1 > v_2$, the first method is worse.

\textbf{Geometric Mean}. All averaged values in the following tables are calculated by the geometric mean with a shift of 1. 

\subsection{Solution Summary}
In Table~\ref{tab:res_sum}, we summarize the performance of the proposed method on instances from QPLIB in three categories. For each category, we report the number of instances for that we find at least a feasible solution, the average primal gap, the number of instances with $\epsilon$-Gap, the average primal integral, and the average time to find the first feasible solution.

\begin{table}[!htbp]
	\centering
	\setlength{\tabcolsep}{2mm}
	\begin{threeparttable}
		\begin{tabular}{lccccc}
            \toprule[1pt]
            Category &Found  &Gap(\%) &$\epsilon$-Gap\footnote{1} &PI &FFT \\
            \midrule
            MIBQP &23/23 &1.43 &9/23 &3.43 &1.18\\
            MIQP &121/121 &1.55 &93/121 &7.71 &3.12\\
            MIQCQP &146/175 &3.05 &95/175 &21.60 &12.98\\
            \midrule
            Total &290/319 &2.24 &197/319 &12.79 &6.310\\
            \bottomrule[1pt]
        \end{tabular}
		\begin{tablenotes}
			\footnotesize
			\item[1] We set a tolerance of $\epsilon$-Gap as $1e-4$.
		\end{tablenotes}
	\end{threeparttable}
	\caption{Performance of the heuristic on QPLIB instances.}
	\label{tab:res_sum}
\end{table}

Our method finds at least one feasible solution for all MIQP instances and for most MIQCQPs. The time to find the first feasible solution is around 5 seconds on average (there is a shift of 1 in values from the table). The effectiveness of the parallel local branching is shown via the primal gap and $\epsilon$-Gap.

\subsection{Comparison between Perturbing Matrix Selections}\label{sec:comp_matrices}

We now assess the impact of the selection of perturbing matrices, which were analyzed theoretically in Section~\ref{sec:matrix_select}. We run the proposed method with both original eigenvalue method, e.g., $P^k = -\min\{0,\lambda_{\min}^{k}\}I$ and our proposed perturbing matrix, $P^{k}:=-\min\{0,\lambda_{s}^{k}\}I$ on MIQCQPs, where $\lambda_{s}^{k} := \min\{2\underline{\lambda}_{\beta}^k, \lambda_{\min}^{k}\} - 1$, where $\underline{\lambda}_{\beta}^k$ is defined as Theorem~\ref{thm:good_lambda} for $Q^k$. Note that the selection of the perturbing matrix only impacts  Algorithm~\ref{Alg:class3_1} and Algorithm~\ref{Alg:class3_2}---these are used to find initial feasible solutions with subsequent solutions found via parallel local branching. 

Table~\ref{tab:comp_matrix} reports all aforementioned indexes for the two types of perturbing matrices. The modified perturbing matrix can find feasible solutions for more instances with a shorter time to find the first feasible solution, which corresponds to our theoretical analysis. After parallel local branching, the reported Gap(\%) and PI from the modified perturbing matrices are still better than the original eigenvalue method, while the original eigenvalue method performs better in $\epsilon$-Gap.

\begin{table}[!htbp]
\centering
\setlength{\tabcolsep}{1.5mm}
\caption{Performance Comparison between Modified Eigenvalue Method and Original Eigenvalue Method for MIQCQPs}
\begin{threeparttable}
\begin{tabular}{lccccc}
\toprule
\textbf{Category} &Found &Gap(\%) &$\epsilon$-Gap\footnote{3} &PI &FFT \\
\midrule
Modified   &146 &3.05 &95 &21.60 &12.98\\
Original   &139 &3.33 &99 &24.13 &14.87\\
\bottomrule
\end{tabular}
\begin{tablenotes}
    \footnotesize
    \item[3] We set a tolerance of $\epsilon$-Gap as $1e-4$.
\end{tablenotes}
\end{threeparttable}
\label{tab:comp_matrix}
\end{table}

For detailed comparison of two types of perturbing matrices, we present the distribution of the time to first feasible solution and the primal integral in Figure~\ref{fig:mod_comp}. Our modified method can complete more instances with a given FFT or PI than the original eigenvalue method.

\begin{figure}[!htbp]
    \centering
    \includegraphics[width=0.49\linewidth]{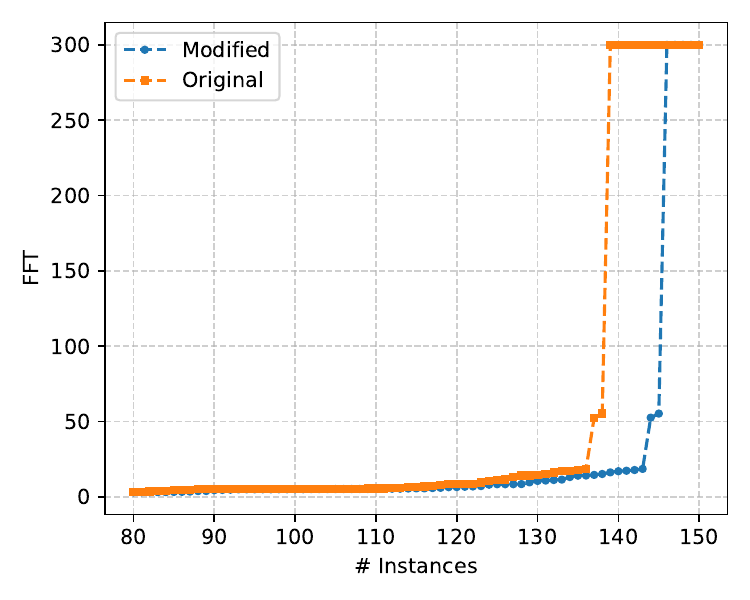}
    \includegraphics[width=0.49\linewidth]{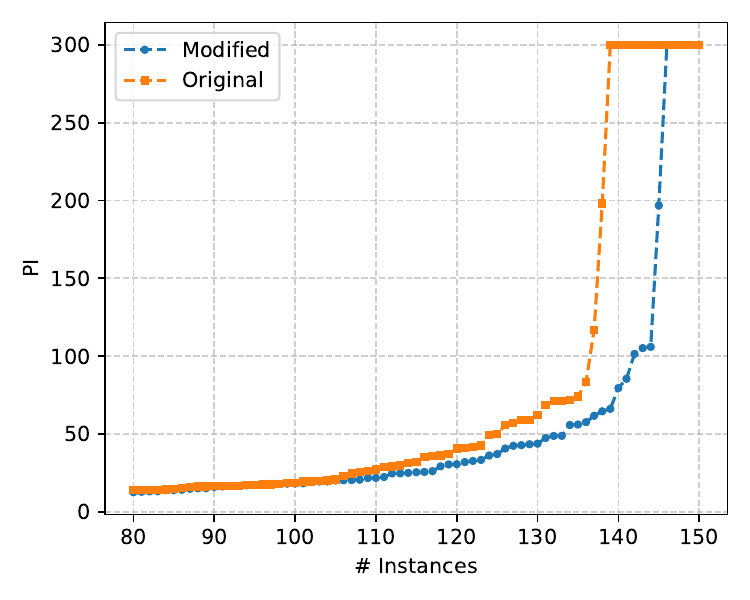}
    \caption{Distribution of the time to first feasible solution, and primal integral over modified and original eigenvalue methods on MIQCQPs.}
    \label{fig:mod_comp}
\end{figure}

\subsection{Impact of Parallelization}

In this section, we compare the parallel local branching with different numbers of parallel subproblems in each branch. We note that algorithms compared in this section use the same proposed primal heuristics to detect the first feasible solution. Therefore, we ignore the terms ``Found'' and ``FFT'' used in previous experiments.

Table~\ref{tab:parallel_comp} summarizes the performances of parallel local branching with 1, 2, 3, and 4 parallel subproblems, and 1-parallel is the original, no parallel, local branching. It implies more numbers of parallel subproblems result in a better final solution (less primal gap) and a better primal integral.

\begin{table}[!htbp]
\centering
\setlength{\tabcolsep}{1.5mm}
\caption{Performance Comparison between Different Numbers of Threads}
\begin{threeparttable}
\begin{tabular}{lccccccccc}
\toprule
& \multicolumn{2}{c}{\textbf{MIBQP}} & \multicolumn{2}{c}{\textbf{MIQP}}& \multicolumn{2}{c}{\textbf{MIQCQP}} & \multicolumn{3}{c}{\textbf{Total}}\\
\cmidrule(lr){2-3} \cmidrule(lr){4-5} \cmidrule(lr){6-7} \cmidrule(lr){8-10}
\textbf{Category} &Gap(\%) &PI &Gap(\%) &PI &Gap(\%) &PI &$\epsilon$-Gap\footnote{1} &Gap(\%) &PI \\
\midrule
4-Parallel   &1.43 &3.43 &1.55 &7.71 &3.05 &21.60 &193 &2.24 &12.79\\
3-Parallel   &1.49 &3.46 &1.59 &7.43 &3.26 &22.98 &183 &2.34 &13.04 \\
2-Parallel   &2.97 &4.63 &1.97 &7.62 &3.86 &23.27 &146 &3.05 &14.22 \\ 
1-Parallel   &6.319 &17.05 &4.54 &13.03 &7.23 &32.74&96 &6.04 &22.01 \\
\bottomrule
\end{tabular}
\begin{tablenotes}
    \footnotesize
    \item[2] We set a tolerance of $\epsilon$-Gap as $1e-4$.
\end{tablenotes}
\end{threeparttable}
\label{tab:parallel_comp}
\end{table}

For detailed comparison, Figure~\ref{fig:parallel_comp} shows the distribution of the optimality gap: the number of instances solved under a given optimality gap, and the distribution of the primal integral over MIQCQPs for all settings. This shows that parallel local branching can improve upon original local branching, but the rate of improvement diminishes as the number of parallel subproblems increases.

\begin{figure}[!htbp]
    \centering
    \includegraphics[width=0.49\linewidth]{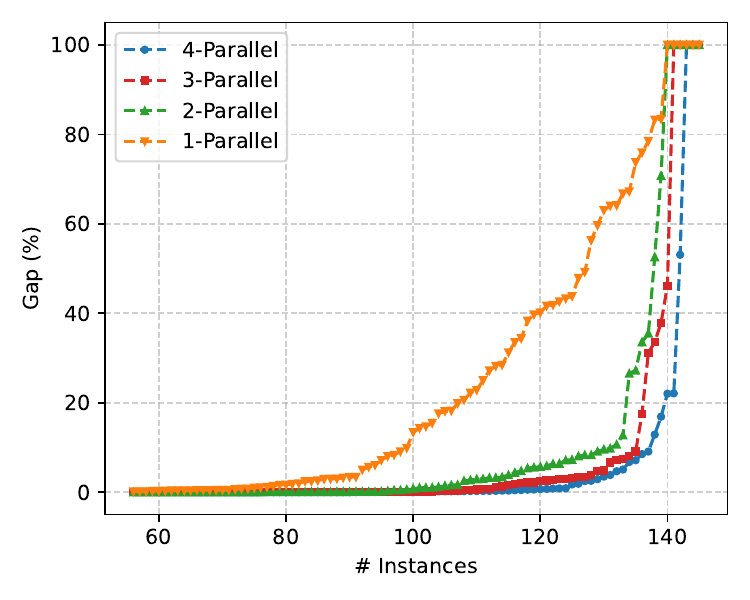}
    \includegraphics[width=0.49\linewidth]{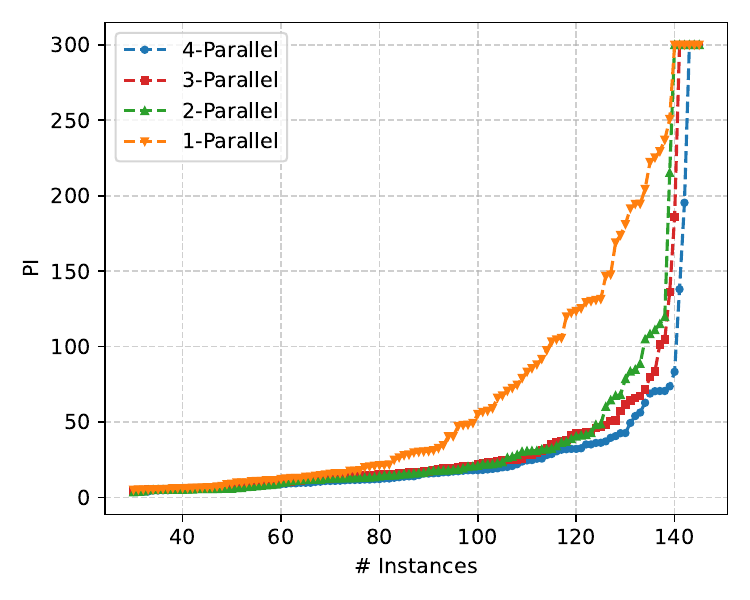}
    \caption{Distribution of the optimality gap, and primal integral over MIQCQP with different parallel settings on MIQCQPs.}
    \label{fig:parallel_comp}
\end{figure}

\subsection{Compare with Gurobi}
To estimate our heuristic relative to state-of-the-art solvers, we compare its performance against Gurobi 12.0.1 on the QPLIB benchmark instances. Gurobi is run with its default setting and a time limit of 300 seconds. Table~\ref{tab:comp_gurobi_point} summarizes the point-to-point comparison between our method and Gurobi with the first detected solution and the final solution, and Table~\ref{tab:comp_gurobi_general} summarizes the overall performance comparison between our method and Gurobi across the three categories. Our method can find better initial solutions in more instances than Gurobi, but takes relatively more time to find them as shown in FFT in Table~\ref{tab:comp_gurobi_general}. For the final solution, Gurobi performs better in point-to-point comparison, while Gurobi also achieves a better primal gap, as Gap(\%) in Table~\ref{tab:comp_gurobi_general}. Thus Gurobi with default setting performs better than our method in some overall sense, although Table.~\ref{tab:comp_gurobi_point} demonstrates that Gurobi and our method are advantageous in different instances. For example, for MIQCQPs, Gurobi finds feasible solutions for more instances than us, e.g. $151$ vs. $146$. However, there are still 7 instances in which our method finds better solutions. Therefore, our method could be a potentially complementary.

\begin{table}[!htbp]
\centering
\setlength{\tabcolsep}{1.5mm}
\caption{Point-to-point comparison of performance vs. Gurobi}
\begin{tabular}{lcccccccc}
\toprule
& \multicolumn{4}{c}{\textbf{First Solution}} & \multicolumn{4}{c}{\textbf{Final Solution}} \\
\cmidrule(lr){2-5} \cmidrule(lr){6-9}
\textbf{Category} & Same & Better & Worse & None & Same & Better & Worse & None \\
\midrule
MIBQP   &1 &20 &2 & 0   &10 &1 &12 & 0  \\
MIQP   &20 &69 &32 & 0  &89 &8 &24 & 0  \\
MIQCQP    &0 &87 &71 &17  &87 &26 &45 &17\\
\bottomrule
\end{tabular}
\label{tab:comp_gurobi_point}
\end{table}

\begin{table}[!htbp]
\centering
\setlength{\tabcolsep}{1.2mm}
\caption{Overall comparison of performance vs. Gurobi}
\begin{tabular}{lcccccccccc}
\toprule
 & \multicolumn{5}{c}{\textbf{Proposed Method}} & \multicolumn{5}{c}{\textbf{Gurobi}} \\
\cmidrule(lr){2-6} \cmidrule(lr){7-11}
\textbf{Category} &Found &Gap(\%) &$\epsilon$-Gap &PI & FFT &Found &Gap(\%) &$\epsilon$-Gap &PI & FFT \\
\midrule
MIBQP  &23 &1.43 &9 &3.43 &1.18 &23 &1.00 &16 &4.38 &1.02 \\
MIQP  &121 &1.55 &93 &7.71 &3.12 &121 &1.01 &100 &3.91 &1.65 \\
MIQCQP   &146 &3.05 &95 &21.60 &12.98 &151 &1.12 &96 &13.9 &6.91 \\
\midrule
Total    &290 &2.24 &197 &12.79 &6.31 &295 &1.07 &212 &7.91 &3.48\\
\bottomrule
\end{tabular}
\label{tab:comp_gurobi_general}
\end{table}

Figure~\ref{fig:gurobi_comp} shows the distribution of time to first feasible solution, i.e., given the FFT, how many instances the method can find a feasible solution, and the primal integral, i.e., how many instances achieve a lower PI than the given one for the method, over MIQCQPs for both our proposed method and Gurobi. We find that Gurobi finds the first feasible solution earlier or achieves a lower primal integral in most cases. However, when given a larger FFT or PI, our methods can solve a bit more instances under the given FFT or PI.  

\begin{figure}[!htbp]
    \centering
    \includegraphics[width=0.49\linewidth]{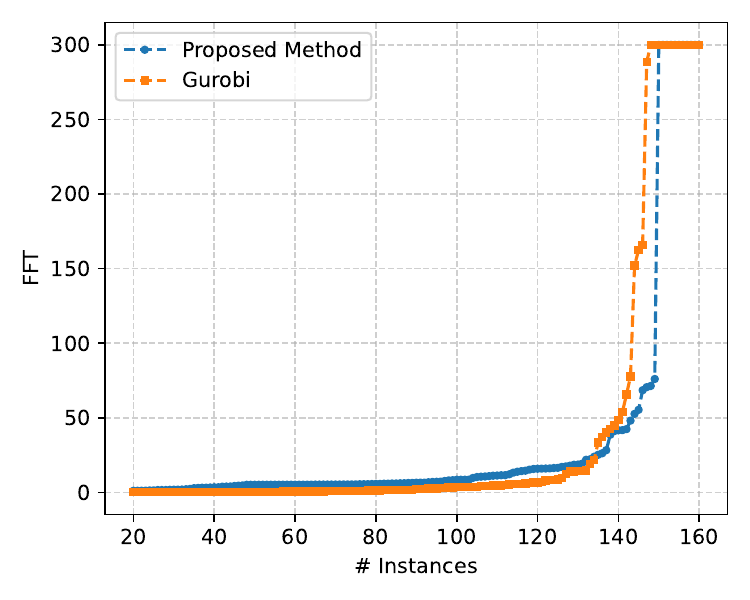}
    \includegraphics[width=0.49\linewidth]{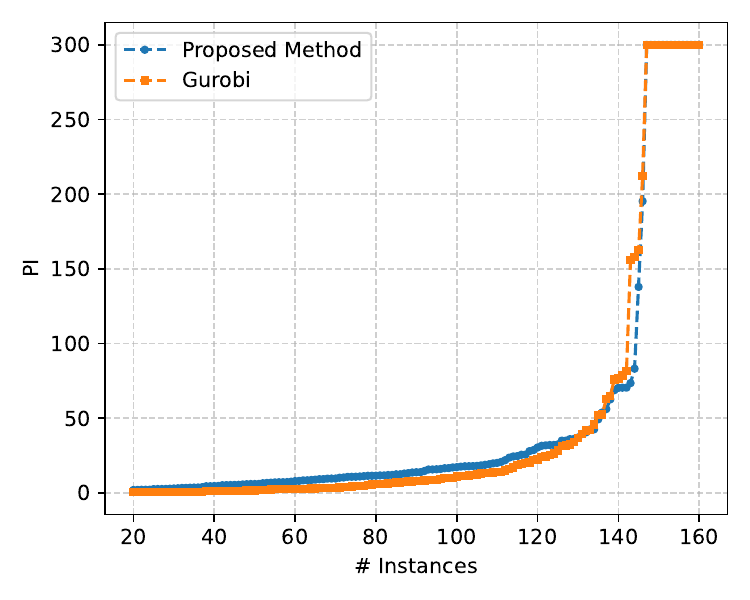}
    \caption{Distribution of the time to first feasible solution, and primal integral over Gurobi and our proposed method on MIQCQPs.}
    \label{fig:gurobi_comp}
\end{figure}

\subsection{Competition and Improvements on QPLIB}

Our heuristic achieved second place in \href{https://github.com/foreverdyz/primalheuristic\_miqcqp}{Land-Doig MIP Computational Competition 2025}\footnote{https://www.mixedinteger.org/2025/competition}, demonstrating its effectiveness on challenging MIQCQPs. Furthermore, the heuristic found 18 new best-known solutions for QPLIB instances, which are listed in Table~\ref{tab:tab1} and can be found at \href{https://qplib.zib.de/}{QBLIB}\footnote{https://qplib.zib.de/}; 3 of these are for instances with no reported feasible solutions. We highlight that this was achieved within the five-minute time limit of the competition.

\begin{table}[!htbp]
\centering
\caption{\label{tab:tab1}Improved solutions for QPLIB instances}
\setlength{\tabcolsep}{1.5mm}{
\begin{tabular}{lcccc}
\toprule[1pt]
Instance &Obj. Sense  &Previous Best &New Solution &Improvement \\

\hline
QPLIB\_0678 &min &- &48000000.0 &-\\
QPLIB\_0689 &max &112.42 &115.0613 &2.35\%\\
QPLIB\_0696 &min &1091160.649 &1086187.1370 &0.46\%\\
QPLIB\_2206 &min &15 &13.0 &15.38\%\\
QPLIB\_3522 &min &84.9179 &84.6250 &0.35\%\\
QPLIB\_3525 &min &4028.3399	&4026.5837 &0.04\%\\
QPLIB\_3582 &min &469654.8988 &469619.8376 &0.007\%\\
QPLIB\_3631 &min &491.3797	&486.0999 &1.09\%\\
QPLIB\_3662 &min &562656.5828 &562617.8818 &0.006\%\\
QPLIB\_3699 &max &-	&3128.5581 &-\\
QPLIB\_3713 &min &85.8838 &85.0071 &1.03\%\\
QPLIB\_3726 &max &-	&3483.5829 &-\\
QPLIB\_3798 &min &111.2686 &110.3276 &0.85\%\\
QPLIB\_3809 &min &472128.5315 &472093.0780 &0.007\%\\
QPLIB\_5023 &min &495034.3674 &495029.0010 &0.001\%\\
QPLIB\_5442 &min &3772309156.0 &1346455159.0 &180.2\%\\
QPLIB\_5925 &min &124.2855 &124.0913 &0.16\%\\
QPLIB\_10022 &min &1348095.635	&1070374.1610 &25.95\%\\
\bottomrule[1pt]
\end{tabular}}
\end{table}

\bibliographystyle{splncs04}
\bibliography{references}

\end{document}